\numberwithin{equation}{section}
\theoremstyle{plain}
\newtheorem{theorem}{Theorem}[section]
\newtheorem{lemma}{Lemma}[section]
\newtheorem{corollary}{Corollary}[section]
\theoremstyle{definition}
\theoremstyle{remark}
\newtheorem{remark}{Remark}[section]
\newcommand{\Real}{\mathbb R}
\newcommand{\A}{{\mathcal A}}
\newcommand{\F}{{\mathcal F}}
\newcommand{\C}{{\mathcal C}}
\newcommand{\I}{{\mathcal I}}
\newcommand{\J}{{\mathcal J}}
\newcommand{\kk}{k}
\newcommand{\half}{\frac{1}{2}}
\DeclareMathOperator*{\argmin}{argmin}
\newcommand{\eps}{\epsilon}
\newcommand{\ignore}[1]{}
\newcommand{\U}[2]{\mathcal{U}^{{#1},{#2}}}
\newcommand{\Ump}{\U{-1}{p}}
\newcommand{\bqc}{\Phi_{\beta}}
\newcommand{\nl}{\Phi}
\newcommand{\Y}{\mathcal{Y}_F}
\newcommand{\norm}[1]{\|#1\|}
\newcommand{\umpnorm}[1]{\norm{#1}_{\Ump}}
\newcommand{\bignorm}[1]{\left \|#1\right \|}
\newcommand{\Cone}{\bar{C}_1}
\newcommand{\Ctwo}{\bar{C}_2}
\newcommand{\Cthr}{\bar{C}_3}
\newcommand{\Cfour}{\bar{C}_4}
\newcommand{\lpe}{{\ell^p_{\eps}}}
\newcommand{\lqe}{{\ell^q_{\eps}}}
\newcommand{\lte}{{\ell^2_\eps}}
\newcommand{\p}{\phi}
\newcommand{\bbar}{\bar{\beta}}
\newcommand{\abar}{\bar{A}}
\newcommand{\oc}{\mathcal{O}}
\newcommand{\stabbound}{2\eps\Cthr\norm{\Delta\beta_\xi y''_\xi}_{\ell^\infty} +2 \eps^2 \{ \Cthr\norm{(1-\beta_{\xi-1})y'''_\xi}_{\ell^\infty} +\Cfour\norm{(1-\beta_{\xi})(y''_\xi)^2}_{\ell^\infty} \}}
\newcommand{\minusstabbound}{2\eps\Cthr\norm{\Delta\beta_\xi y''_\xi}_{\ell^\infty} -2 \eps^2 \{ \Cthr\norm{(1-\beta_{\xi-1})y'''_\xi}_{\ell^\infty} +\Cfour\norm{(1-\beta_{\xi})(y''_\xi)^2}_{\ell^\infty} \}}
\newcommand{\periodsum}{\sum_{\xi = 1}^N}
\newcommand{\ltwoconsbound}[2]{#1 \Ctwo\norm{ \Delta \beta_\xi y''_\xi}_{\lte} + #2 \{\Ctwo\norm{(1-\beta_{\xi-1})y'''_\xi}_{\lte} +\Cthr\norm{(1-\beta_{\xi})(y''_\xi)^2}_{\lte}\}}
\newcommand{\axi}{\alpha_{\xi}}
\newcommand{\bqce}{\Phi^{qce}_{\gamma}}
\newcommand{\bqnl}{\Phi^{qnl}_{\beta}}
\newcommand{\bqcab}{\Phi_{\alpha, \beta}}
\newcommand{\btotal}{\Phi^{total}_\beta}
\newcommand{\gtotal}{\Phi^{total}_\gamma}
\newcommand{\fv}{\delta}
\newcommand{\bigabs}[1]{\left | #1 \right|}
\newcommand{\gs}{\gamma_s}
\newcommand{\tk}{\eta}
\definecolor{purplecol}{RGB}{130, 0, 130}
\begin{document}

\title
[Analysis of Energy-Based Blended Quasicontinuum Approximations]
{Analysis of Energy-Based Blended Quasicontinuum Approximations}
\author{Brian Van Koten}
\author{Mitchell Luskin}

\address{Brian Van Koten\\
School of Mathematics \\
University of Minnesota \\
206 Church Street SE \\
Minneapolis, MN 55455 \\
U.S.A.}
\email{vank0068@umn.edu}
\address{Mitchell Luskin \\
School of Mathematics \\
University of Minnesota \\
206 Church Street SE \\
Minneapolis, MN 55455 \\
U.S.A.}
\email{luskin@umn.edu}

\thanks{
This work was supported in part by DMS-0757355,
 DMS-0811039,  the PIRE Grant OISE-0967140, the Institute for Mathematics and
Its Applications, and
 the University of Minnesota Supercomputing Institute.
  This work was also supported by the Department of Energy under Award Number
DE-SC0002085.
}

\keywords{quasicontinuum, error analysis, atomistic to continuum}

\subjclass[2000]{65Z05,70C20}

\date{\today}

\begin{abstract}
The development of patch test consistent quasicontinuum energies
for multi-dimensional crystalline solids modeled by many-body
potentials remains a challenge. The original quasicontinuum energy (QCE)~\cite{Miller:2008}
has been implemented for many-body potentials in two and three
space dimensions, but it is not patch test consistent. We propose that by blending
the atomistic and corresponding Cauchy-Born continuum models of QCE in an interfacial
region with thickness of a small number $k$ of blended atoms, a
general quasicontinuum energy (BQCE) can be developed with the potential to
significantly improve the accuracy of QCE
near lattice instabilities such as dislocation
formation and motion.

In this paper, we give an error analysis of the blended quasicontinuum
energy (BQCE) for a periodic one-dimensional chain of atoms with
next-nearest neighbor interactions.  Our analysis includes the optimization
of the blending
function for an improved convergence rate.
We show that the $\ell^2$ strain error
for the non-blended QCE energy (QCE), which has low order $\text{O}(\varepsilon^{1/2})$
where $\varepsilon$ is the atomistic length scale~\cite{Dobson:2008b,mingyang},
 can be reduced by a factor of $k^{3/2}$ for an optimized blending function where $k$ is the number of atoms in the blending region.
The QCE energy has been further shown to suffer from a O$(1)$ error in the critical strain at which the lattice
loses stability~\cite{doblusort:qce.stab}.
We prove that the error in the critical strain of BQCE
can be reduced by a factor of $k^2$ for an optimized blending function, thus demonstrating that the BQCE energy
for an optimized blending function has
the potential to give an
accurate approximation of the deformation near lattice instabilities such as crack growth.
\end{abstract}

\maketitle

\section{Introduction}\label{sec: introduction}

Crystalline materials often have highly singular strain fields
at crack tips, dislocations, and grain boundaries that require
the accuracy of atomistic modeling only in small regions
surrounding these defects.  However, these localized defects
interact through long-ranged elastic fields with a much larger region
where the strain gradients are sufficiently small to allow
accurate and efficient approximation by coarse-grained continuum finite element models.
This has motivated the development of numerical
methods that couple atomistic regions
with continuum regions to compute problems with
length scales that are sufficiently large for accurate
and reliable scientific and engineering application
~\cite{makridakis10,Legoll:2005,gaviniorbital,LinP:2003a,LinP:2006a,Ortner:2008a,E:2006,Dobson:2008a,Gunzburger:2008a,Miller:2008,PrudhommeBaumanOden:2006,shapeev}.

The quasicontinuum
energy (QCE) ~\cite{Ortiz:1995a} couples
an atomistic model to a finite element continuum model based on the
Cauchy-Born strain energy density.  The Cauchy-Born strain energy density
reproduces the atomistic energy density for a uniformly deformed lattice.
The QCE atomistic-to-continuum coupling also reproduces the atomistic
energy density for a uniformly deformed lattice simply by modifying the volume
of the triangles or tetrahedra within the cut-off radius of the boundary of
the atomistic region by using the Voronoi cell~\cite{Miller:2008}.  The QCE energy remains the most
popular quasicontinuum energy since the modification of the volume of the
triangles near the interface to reproduce the atomistic energy density
can be explicitly achieved and has been implemented in the code~\cite{qcmethod}
 for multi-dimensional problems with many-body potentials.

A quasicontinuum energy is said to be {\em patch test consistent}
if it reproduces the zero net forces at each atom for a uniformly
deformed lattice.  The fully atomistic and Cauchy-Born energies
are patch test consistent by symmetry, but the symmetry is broken in
the QCE atomistic-to-continuum interface.  The nonzero forces
in the QCE atomistic-to-continuum interface of a uniformly
deformed lattice are called {\em ghost forces}~\cite{dobs-qcf2,dobsonluskin07,Miller:2008,rodney_gf}.

The {\em ghost force correction method} (GFC) was developed to
improve the accuracy of QCE~\cite{rodney_gf}, and it has been shown
to converge to the patch test consistent force-based quasicontinuum method (QCF)~\cite{curtin_miller_coupling,dobsonluskin07}.
GFC has been implemented in the code~\cite{qcmethod} and further developed to
utilize the efficiency
of atomistic-to-continuum modeling and continuum mesh {\em a posteriori}
 adaptivity~\cite{arndtluskin07c,arndtluskin07b,Miller:2008,rodney_gf}.

However, it has been shown that the
force-based quasicontinuum method (QCF) gives a non-conservative
force field~\cite{curtin_miller_coupling,dobsonluskin07,Miller:2008},
so the stability of a deformation that satisfies the QCF equations
cannot be simply determined by checking whether it is a local minimum
of a quasicontinuum energy~\cite{doblusort:qcf.stab}.
The linearization of the QCF method has also been shown to be indefinite
and to have unusual stability properties ~\cite{dobs-qcf2} which presents a further challenge
to the notion of stability and
to the development of efficient and reliable iterative methods~\cite{qcf.iterative,luskin.iter.stat}.

The potential for more reliable and efficient iterative solution methods and more direct extensions
to finite temperature dynamical methods~\cite{PhysRevLett.95.060202} make energy-based quasicontinuum methods more desirable
than force-based quasicontinuum methods.  Several quasicontinuum energies have been proposed which
satisfy the patch test for a limited class of problems, but a quasicontinuum energy that satisfies the
patch test for many-body potential interactions in several space dimensions with general coupling interfaces
and coarsening has yet to be developed.

The quasi-nonlocal energy (QNL)~\cite{Shimokawa:2004} gives an explicit
and implemented algorithm for
close range interactions (up to next-nearest neighbor interactions for a chain
and up to fourth-nearest neighbor interactions for a FCC lattice) by restoring the
symmetry of the interactions.  Ghost forces remain for nonplanar interfaces
and coarsening~\cite{E:2006}.  The geometric consistency approach gives geometric
and algebraic conditions to
allow finite range interactions, but an implemented algorithm has only been given
for low index planar interfaces.  An efficient and implemented algorithm
has yet to be given for a general nonplanar interface
surrounding a defect.
Ghost forces still remain for nonplanar interfaces
and coarsening~\cite{E:2006}.  Further, the generalization of the
geometric consistency approach to three-dimensional problems seems to depend on special
decompositions of three-dimensional space into congruent tetrahedra (for example, the hexagonal
lattice is such a special decomposition of two-dimensional space).

The QCE energy, on the other hand, can be implemented for any decomposition of
three-dimensional space into tetrahedra (see (21) in ~\cite{Miller:2008}),
and so can the blended quasicontinuum energy (BQCE) which we propose below.
In fact, a code for the BQCE energy can be obtained by
simply modifying any QCE code and suppressing the coarsening
in the blending interface. We have done that
for our two-dimensional QCE code~\cite{luskin.durham}. The BQCE code will
then be able to utilize any adaptive atomistic-to-continuum
and continuum coarsening features of the QCE code.

More recently, generalizations have been given for the explicit extension of the QNL energy
to allow finite range interactions~\cite{shapeev,LuskinXingjie}.  The consistent a/c
coupling ~\cite{shapeev} gives an implemented patch test consistent quasicontinuum energy for two-dimensional
problems with general pair potential interactions, atomistic-to-continuum coupling interface,
and coarsening.
No generalizations of these or other patch test consistent methods to three-dimensional crystals, multi-lattices, or multi-body potentials are
yet known.

Since there remain important problems for which no patch test consistent methods are known,
we feel it is important to develop general strategies for reducing the error of coupled
energies which can be applied to a broad class of methods and problems.
We propose that by blending
the atomistic and corresponding Cauchy-Born continuum energies of QCE in an interfacial
region with thickness  of a small number $k$ of blended atoms, a blended
quasicontinuum energy (BQCE) can be developed with the potential to
significantly improve the accuracy of the QCE energy
near lattice instabilities such as dislocation
formation and motion.
This blended quasicontinuum energy can be implemented for any problem for which a QCE energy has been developed,
and we think that most existing codes for QCE could be easily modified to implement BQCE.

The blended quasicontinuum energy is not patch test consistent,
but our error estimates in Section~\ref{subsec: modeling}
show that the error due to the ghost force can be reduced significantly by optimizing the blending function
over a blending region of small size $k.$
Therefore, we think that the use of BQCE is a good strategy for reducing the error of QCE when applied to
problems for which no patch test consistent methods are known.
Moreover, patch test consistent quasicontinuum methods tend to be more complicated and more difficult to implement than QCE.
Thus, even for problems for where a patch test consistent quasicontinuum energy is known,
we think that the BQCE energy remains an attractive quasicontinuum energy.

The BQCE method is similar to many other methods in which atomistic and continuum energies are smoothly blended over an interfacial
region, with some features specific to the quasicontinuum setting.
We call such a region a \emph{blending region}, and we call such a method a \emph{blended} method.
(Other authors call the blending region a ``handshake region", an ``interface", a ``bridging region", or an ``overlap".)
We call the weights which blend the atomistic and continuum contributions to the energy \emph{blending functions}.
For example, the AtC coupling~\cite{badia:forcebasedAtCcoupling},
the bridging domain method~\cite{xiao:bridgingdomain},
and the Arlequin method for coupling particle and continuum models~\cite{bauman:applicationofArlequin}
are all energy-based blended methods which are similar to BQCE.
By contrast, in other schemes, the atomistic and continuum models
are coupled abruptly across a sharp interface consisting of only a few atoms.
Such schemes include the energy-based quasicontinuum (QCE) method~\cite{Ortiz:1995a},
the quasinonlocal quasicontinuum (QNL) method~\cite{Shimokawa:2004},
the generalized QNL method~\cite{LuskinXingjie},
and the consistent a/c coupling~\cite{shapeev}.

In this paper, we present an error analysis of the blended quasicontinuum energy (BQCE).
We determine precisely how the error of the BQCE method depends on the size of the blending region
and how it depends on the blending functions.
We analyze the accuracy of BQCE applied to the problem of a one-dimensional chain of atoms
with periodic boundary conditions and a next nearest neighbor pair interaction model.
Our choice of a one-dimensional analysis allows us to explicitly investigate the accuracy of the
BQCE energy for deformations up to lattice instability~\cite{doblusort:qce.stab},
which is crucial for the computation of critical strains in
lattice statics as well as for the computation of the dynamics of defect
nucleation and movement.

We focus on modeling error and do not consider coarse-graining
in our analysis. Instead, we assume that the continuum energy
is a Cauchy-Born energy discretized by piecewise linear finite
elements with nodes at every atom. We refer the reader
to~\cite{Ortner:2008a,ortner.wang} for an analysis of
coarse-graining for a problem similar to our one-dimensional
chain. In addition, we constrain the displacements of atoms in
the blending region to exactly match the continuum displacement
field. In some other methods, the displacements of atoms in the
blending region are coupled weakly to the continuum
displacement field using Lagrange multipliers or a penalty
method. This is the case in the Arlequin
method~\cite{bauman:applicationofArlequin}, in the bridging
domain method~\cite{xiao:bridgingdomain}, and in some
implementations of the AtC
Coupling~\cite{badia:forcebasedAtCcoupling,
badia:onAtCcouplingbyblending}. Computational experiments
comparing various methods of weak coupling are given in
~\cite{seleson:bridgingmethods,badia:onAtCcouplingbyblending,badia:forcebasedAtCcoupling,xiao:bridgingdomain}.
A numerical analysis of a mixed finite element method for a
weakly coupled problem is given
in~\cite{bauman:applicationofArlequin}.

Blending can also improve the accuracy of quasicontinuum
energies which are more accurate than QCE, but may not satisfy
the patch test for general interactions, nonplanar interfaces,
or coarsening.  We thus also define and analyze a blended
generalization of QNL which we call the \emph{blended QNL}
(BQNL) energy. The BQNL method is an energy-based, blended
coupling which passes the patch test for the one-dimensional
problem analyzed in this paper, but does not generally satisfy
the patch test as described above.

Our analysis extends the techniques developed
in~\cite{ortner:qnl1d} for the patch test consistent QNL to
obtain precise estimates of the error for BQCE, which does not
satisfy the patch test. Most importantly, our estimates can be
used to optimize the blending function and blending interface
size of BQCE to obtain accurate solutions up to lattice
instabilities. Our error estimate in Theorem~\ref{thm: a priori
existence for BQCE} shows that the $\ell^2$ strain error for
the BQCE method can be reduced by a factor of $k^{3/2}$ where
$k$ is the number of atoms in the blending region. This result
is suggested by the results of~\cite{Dobson:2008b,mingyang} on
the decay of the coupling error for QCE. It has been shown that
the QCE method suffers from a O$(1)$ error in the critical
strain at which the lattice loses stability (which models
fracture or the formation of a
defect)~\cite{doblusort:qce.stab}. We prove in
Theorem~\ref{thm: aprstab} that the error in the critical
strain of BQCE can be reduced by a factor of $k^2$ where $k$ is
the number of atoms in the blended interface region. In
Remark~\ref{rem: ghost force}, we use our modeling error
estimates to derive blending functions for the BQCE method
which minimize the error due to the ghost force. We give our
\emph{a priori} error estimate for BQNL in Theorem~\ref{thm: a
priori existence for BQNL}. In Theorem~\ref{thm: a priori
existence for BQNL}, we show that the $\ell^2$ strain error for
the BQNL method can be reduced by a factor of $k^{1/2}$. We
prove in Theorem~\ref{thm: aprstab} that the BQNL method does
not suffer from a critical strain error.

\section{An atomistic model and its quasicontinuum approximations}\label{sec: model}

\subsection{The atomistic model problem}\label{subsec: atomistic model}
We begin by presenting a model of a one-dimensional lattice of atoms.
Our reference configuration is the integer lattice $\eps \mathbb{Z}$ with lattice spacing $\eps > 0$.
The admissible deformations are N-periodic, mean-zero displacements of uniformly strained states of $\eps \mathbb{Z}$.

Precisely, for $N \in \mathbb{N}$, we let
\begin{equation*}
\mathcal{U} := \{ u: \mathbb{Z} \rightarrow \Real \mbox{ : }
u_\xi = u_{\xi+N} \mbox{ for all } \xi \in \mathbb{Z} \mbox{
and } \periodsum u_\xi = 0\}
\end{equation*}
be the set of N-periodic, mean-zero displacements.
For $F \in (0,\infty)$ we let
\begin{equation*}
y^F_\xi := F \eps \xi
\end{equation*}
be the uniformly deformed state with \emph{macroscopic strain} $F$,
and we let
\begin{equation*}
\Y := \{ y: \mathbb{Z} \rightarrow \Real \mbox{ : } y_\xi = y^F_\xi + u_\xi \mbox{ for some } u \in \mathcal{U} \}
\end{equation*}
be the set of admissible deformations with fixed macroscopic
strain $F$. We will fix $\eps = \frac{1}{N}$ throughout the
remainder of the paper, so that the reference length of a
period is one, independently of $\eps$. For notational
convenience, we have let the deformations $y \in \Y$ be
functions of $\mathbb{Z}$ instead of $\eps \mathbb{Z}$ even
though we still think of $\eps \mathbb{Z}$ as the reference
configuration. We also remark that the ``displacements" $u$
which appear in the definition of $\Y$ are not displacements
measured relative to the reference lattice $\eps \mathbb{Z}$,
but are instead displacements relative to the uniformly
deformed state $y^F$.

In the language of mechanics, one would say that our
choice of the deformation space $\Y$ means that we have imposed
``periodic boundary conditions with macroscopic strain $F$."
Periodic boundary conditions are mathematically similar to
Dirichlet boundary conditions in that constraints are applied
to the space of admissible deformations. In our case, the
constraint is given by the macroscopic strain $F$ which models
the macroscopic scale stretching or contraction of the chain.
Periodic boundary conditions are useful because they eliminate
surface effects near the boundaries of the chain.

For a deformation $y \in \Y,$ we define a \emph{stored energy per
  period} which we call $\nl(y)$. Our stored energy $\nl(y)$ sums the
energies of all the interactions between first nearest neighbors and
second nearest neighbors in the lattice. We compute the energies of
interaction using a potential $\phi:(0,\infty]\rightarrow\Real$ which
we assume satisfies
\begin{enumerate}
\item $\phi \in C^4((0,\infty], \Real);$
\item there exists $r^*$ so that $\p''(r) > 0$ for all $r
    \in (0, r^*)$, and $\p''(r) < 0$ for all $r \in
    (r^*,\infty);$
\item
$\phi$ and its derivatives decay at infinity.
\end{enumerate}
Examples of commonly used interaction potentials which satisfy the
above conditions include the Lennard-Jones potential and the Morse
potential.
Specifically, we define
\begin{equation}\label{def: nl energy}
\nl(y) = \eps \periodsum \p(y'_\xi) + \p(y'_\xi + y'_{\xi+1}),\quad \mbox{ where}\quad
y'_\xi := \frac{y_\xi -y_{\xi-1}}{\eps}.
\end{equation}
In~\eqref{def: nl energy}, we call the terms of the form
$\p(y'_\xi)$ \emph{first nearest neighbor interactions}, and we
call terms of the form $\p(y'_\xi + y'_{\xi+1})$ \emph{second
nearest neighbor interactions}.

 The reader will observe that we have scaled the energy
by $\eps$ and the interatomic distances by $\eps^{-1}$. This is
done for two related reasons. First, quasicontinuum methods are
only needed when when $N$ is large (equivalently, when $\eps$
is small). By introducing the scaling above, we are able to
distinguish those parts of the error which are most significant
in the practically relevant limit of large $N$. The reader
should bear this in mind when interpreting our error estimates
in Sections~\ref{subsec: modeling},~\ref{subsec: stability},
and~\ref{subsec: a priori existence}. Second, we desire that in
the limit as $\eps$ tends to zero (equivalently, as $N$ tends
to infinity), the atomistic energy converges to the fully
continuum Cauchy-Born energy~\eqref{eqn: fully continuum cauchy
born energy}. This is explained in detail in
Section~\ref{subsec: cauchy born approx} below. Roughly
speaking, this ensures that the atomistic energy is compatible
with the continuum energy to which it is coupled in the BQCE
and BQNL schemes. The compatibility of the two energies is
reflected in the fact that the modeling error of the
Cauchy-Born energy~\eqref{eqn: cauchy born rule} is of order
$\eps^2$ (See Remark~\ref{rem: clarification of modeling}).

Now suppose we want to model the response of our lattice to a dead load.
We let $\mathcal{U}^*$  denote the algebraic dual of $\mathcal{U}$,
and we equip the space $\mathcal{U}$ with the \emph{$\ell^2_\eps$ inner product} given by
\begin{equation}\label{eqn: inner product}
<u, v> := \eps \periodsum u_\xi v_\xi.
\end{equation}
Generally speaking, a dead load $f$ should be thought of as an element of $\mathcal{U}^*$.
However, since $\mathcal{U}$ is a Hilbert space with the inner product~\eqref{eqn: inner product},
any $g \in \mathcal{U}^*$ is represented by some $f \in \mathcal{U}$.
That is, for any $g \in \mathcal{U}^*$ there exists $f \in \mathcal{U}$ so that for all $v \in \mathcal{U}$,
$g(v) = <f, v>$.
Thus, we will think of dead loads as elements of $\mathcal{U}$.

The total energy for an atomic chain subject to a dead load $f \in \mathcal{U}$ is then given by
\begin{equation*}
\Phi^{total}(y) = \nl(y) - <f,y>,
\end{equation*}
and the equilibrium deformation of the atoms solves the minimization
problem:
\begin{equation}\label{eqn: minimization problem}
y \in \argmin_{y \in \Y} \Phi^{total}.
\end{equation}
Let $\fv \Phi$ denote the first variation of the energy $\Phi$, and let
$\fv^2 \Phi$ denote the second variation.
We will say that a solution $y \in \Y$ of the minimization problem~\eqref{eqn: minimization problem}
is \emph{strongly stable} if $\fv^2 \nl(y)[u,u]>0$ for all $u \in \mathcal{U} \setminus \{0\}$,
and we will call a deformation $y \in
\Y$ an \emph{equilibrium} if it solves
\begin{equation*}
\fv \Phi (y)[u] = <f,y> \mbox{ for all } u \in \mathcal{U}.
\end{equation*}

\subsection{The Cauchy-Born approximation}\label{subsec: cauchy born approx}
We call an energy $\Psi(y)$ \emph{local} if it can be written in the form
\begin{equation*}
\Psi(y) = \eps \periodsum \psi(y'_\xi)
\end{equation*}
where $\psi: (0, \infty] \rightarrow \Real$ is called a
\emph{strain energy density}. Otherwise, we say that an energy
is \emph{nonlocal}. Observe that energy~\eqref{def: nl energy}
is nonlocal. The development and use of a strain energy
density $\phi$ allows coarse-graining in the continuum region
by utilizing the full range of algorithms, codes, and analysis
developed for the continuum finite element method. (An
alternative quasicontinuum approach uses quadrature to
approximate the nonlocal coarse-grained energy without the
construction of a strain energy
density~\cite{Gunzburger:2008a}. Peridynamics offer another
promising fully nonlocal approach to
coarse-graining~\cite{peri}.) Thus, it is useful to devise
local energies which approximate~\eqref{def: nl energy}. The
key to developing such energies is to observe that as long as
$y'_\xi$ varies slowly between neighboring lattice points, we
have
\begin{equation*}
y'_{\xi} + y'_{\xi+1} \approx 2y'_{\xi} \approx 2y'_{\xi+1}.
\end{equation*}
If we replace the second nearest neighbor terms $\p(y'_{\xi} + y'_{\xi+1})$ by $\half \{ \p(2y'_{\xi}) + \p(2y'_{\xi+1}) \}$
in~\eqref{def: nl energy}, we then obtain the \emph{Cauchy-Born energy}
\begin{equation}\label{eqn: cauchy born rule}
\Phi^{cb}(y) := \eps \periodsum \p(y'_\xi) + \half \{ \p(2y'_{\xi}) + \p(2y'_{\xi+1}) \},
\end{equation}
which is a local energy commonly used to approximate~\eqref{def: nl energy}.

An alternative derivation of the Cauchy-Born energy is as follows.
Suppose that $Y : [0,1] \rightarrow \Real$ is a $C^\infty$ deformation of $[0,1]$,
and define $y^\eps_\xi = Y(\eps \xi)$.
It can be shown that
\begin{equation*}
\lim_{\eps \rightarrow 0^+} \Phi(y^\eps) = \int_{[0,1]} \psi^{cb}\left (\frac{dY}{dx}(x) \right ) dx =: \Psi^{cb}(Y),
\end{equation*}
where $\psi^{cb}(r) := \p(r) + \p(2r)$ is called the \emph{Cauchy-Born strain energy density}.
We call the energy $\Psi^{cb}$ the \emph{fully continuum Cauchy-Born energy}.
We refer the reader to~\cite{lions} for a detailed discussion of the convergence of $\Phi(y^\eps)$  to $\Psi^{cb}(Y)$ as $\eps$ goes to zero.

Now suppose that $Y$ is in the Lagrange $P^1$ finite element
space on $[0,1]$ with nodes at the reference position of each
atom; so the nodes are $\eps \xi$ for $\xi = 1, \dots, N$. In
that case, it is easy to show that
\begin{equation}\label{eqn: fully continuum cauchy born energy}
\Psi^{cb}(Y) = \Phi^{cb}(y^\eps).
\end{equation}
Thus, we see that the Cauchy-Born energy $\Phi^{cb}$ is a
finite element approximation of the fully continuum Cauchy-Born
energy $\Psi^{cb}$. Consequently, we will refer to $\Phi^{cb}$
as a ``continuum" energy. We say that the energy $\Phi^{cb}$ is
not coarse-grained since $\Phi^{cb}$ has the same number of
degrees of freedom as the atomistic energy $\nl$: both depend
on the deformed position of each atom.

\subsection{Quasicontinuum approximations}\label{subsec: blended quasicontinuum approx}

Suppose we want to approximate a solution to the minimization problem~\eqref{eqn: minimization problem}
which has defects in localized regions but which is smooth throughout most of the lattice.
Efficiency requires that most of the lattice be modeled using the Cauchy-Born energy~\eqref{eqn: cauchy born rule},
while accuracy requires that the defects be treated using the nonlocal energy~\eqref{def: nl energy}.
Thus, we desire a coupling of the two models.

We distinguish two approaches to deriving coupled energies.
We call these the \emph{strain-energy based} approach and \emph{bond based} approach.
In the strain-energy based approach,
one first defines an atomistic \emph{energy per atom}.
For our one-dimensional chain, an appropriate energy per atom is
\begin{equation}\label{eqn: atomistic energy per atom}
\Phi^a_\xi (y) := \half \{\p(y'_\xi)  + \p(y'_{\xi+1}) + \p(y'_\xi + y'_{\xi-1}) + \p(y'_{\xi+2} + y'_{\xi+1}) \}.
\end{equation}
We call $\Phi^a_\xi(y)$ the \emph{atomistic energy at atom $\xi$}.
Observe that $\Phi^a_\xi(y)$ is half the total energy of all bonds involving atom $\xi$,
so $\Phi(y) = \eps \periodsum \Phi^a_\xi(y)$.
The energy per atom should be interpreted as the energy per length $\eps$ at atom $\xi$;
thus, $\Phi_\xi(y)$ is analogous to a continuum strain-energy density.

To define a coupled energy we now blend the atomistic energy per atom with the continuum energy.
Schematically, we choose some blending functions $\alpha:[0,1] \rightarrow [0,1]$ and $\beta:\eps \mathbb{Z} \rightarrow [0,1]$.
Then for a deformation $Y:[0,1] \rightarrow \Real$ with corresponding atomistic deformation $y^\eps_\xi := Y(\eps \xi)$,
we define
\begin{equation}\label{eqn: schematic coupled energy}
\Phi^{coupled}(Y) := \int_{[0,1]} \alpha(x) \psi^{cb} \left (\frac{dY}{dx}(x) \right ) dx + \eps \periodsum \beta_\xi \Phi^a_\xi(y^\eps).
\end{equation}
In equation~\eqref{eqn: schematic coupled energy},
one should imagine that the function $\alpha$ is zero in a small region containing any defects,
and one throughout the bulk of the lattice.
One should imagine that $\beta$ is one near defects,
and zero throughout the bulk of the lattice.
For an abruptly coupled energy, one should imagine that $\alpha$ is the characteristic function of the continuum region,
and $\beta$ is the characteristic function of the atomistic region.
By contrast, for a blended energy, one should imagine that $\alpha$ and $\beta$ are more smooth.
The energy-based quasicontinuum (QCE) method~\cite{Miller:2008}, the bridging domain method~\cite{xiao:bridgingdomain},
and the Arlequin method~\cite{bauman:applicationofArlequin} all take essentially the form~\eqref{eqn: schematic coupled energy}.
Moreover, although the AtC method~\cite{badia:forcebasedAtCcoupling} was derived in variational form,
the equilibrium condition is the Euler-Lagrange equation of an energy similar to~\eqref{eqn: schematic coupled energy}.

We will now explain the QCE energy in detail.
Let $[0,1] = \mathcal{A} \cup \mathcal{C}$ be a partition of the domain $[0,1]$ into an atomistic region $\mathcal{A}$
and a continuum region $\mathcal{C}$.
Suppose that $Y$ is in the $P^1$ Lagrange finite element space with nodes at every lattice site,
and let $y_\xi := Y(\eps \xi)$ be the corresponding deformation of the atomistic lattice $\eps \mathbb{Z}$.
In this special case,
the QCE energy reduces to
\begin{equation}\label{eqn: qce energy}
\begin{split}
\Phi^{qce}(y) :&= \eps \sum_{\eps \xi \in \mathcal{A}} \Phi^a_\xi (y) + \eps \sum_{\eps \xi \in \mathcal{C}} \Phi^c_\xi (y), \mbox{ where } \\
\Phi^{c}_\xi(y) :&= \half \{\p(y'_\xi)  + \p(y'_{\xi+1}) + \p(2 y'_\xi) + \p(2y'_{\xi+1}) \}.
\end{split}
\end{equation}
We call $\Phi^{c}_\xi$ the \emph{continuum energy at} $\xi$.

Our blended energy-based quasicontinuum energy (BQCE) is based on a blend of the
atomistic energy at $\xi$~\eqref{eqn: atomistic energy per atom}
and the continuum energy at $\xi$~\eqref{eqn: qce energy}.
Let $\gamma: \mathbb{Z} \rightarrow [0,1]$ be a blending function.
For $y \in \Y$ we define
\begin{equation}
\begin{split}
\Phi^{bqce}_\gamma(y) :&= \eps \periodsum \gamma_\xi \Phi^{c}_\xi(y) + (1 - \gamma_\xi) \Phi^a_\xi(y) \\
&= \eps \periodsum \p(y'_{\xi}) + \frac{\gamma_\xi}{2}
 \{\p(2y'_\xi)+\p(2y'_{\xi+1})\}
       + \frac{(1-\gamma_\xi)}{2} \{\p(y'_{\xi-1} + y'_{\xi})+\p(y'_{\xi+1} + y'_{\xi+2})\}.
\end{split}
\end{equation}
\ignore{
In general, we will assume that $\gamma$ is zero in a small region containing any defects,
and that $\gamma$ equals one throughout the bulk of the lattice.
We will call the set of all $\xi$ such that $\gamma_\xi = 0$ the \emph{atomistic region},
we will call the set of $\xi$ such that $\gamma_\xi = 1$ the \emph{continuum},
and we will call the set of all other $\xi$ the \emph{interface}.}
We observe that the QCE energy is simply the BQCE energy whose blending function $\gamma$
is the characteristic function of the continuum region.
We will show in Theorem~\ref{thm: modeling} that BQCE does not pass the patch test where we recall that
a quasicontinuum energy passes the \emph{patch test} if there are no forces under uniform strain;
for our one-dimensional problem an energy $\Phi^{coupled}$ passes that patch test if $\fv \Phi^{coupled}(y^F) = 0$ for all $F \in (0, \infty)$.
We will call a method which passes the patch test \emph{patch test consistent}.

We will now discuss the QNL and BQNL energies in detail.
The atomistic and continuum energies of the bond between the nearest neighbors at reference positions $\eps \xi$ and $\eps (\xi - 1)$
are taken to be $\phi(y'_\xi)$.
This is the same as the energy of that bond in the fully atomistic model~\eqref{def: nl energy}.
The atomistic energy of the bond between the second nearest neighbors at reference positions $\eps (\xi + 1)$ and $\eps (\xi - 1)$
is taken to be $\phi(y'_{\xi+1} + y'_\xi)$,
and the continuum energy of that bond is taken to be $\half \{\phi(y'_{\xi+1}) + \phi(y'_\xi)\}$.
As discussed in Section~\ref{subsec: cauchy born approx}, this choice of continuum bond energy is related to the Cauchy-Born rule.
The coupled energy is then given by
\begin{equation}\label{eqn: qnl energy}
\Phi^{qnl}(y) := \eps \periodsum \p(y'_\xi) + \eps \sum_{\eps \xi \in \mathcal{A}} \phi(y'_{\xi+1} + y'_\xi)
 + \eps \sum_{\eps \xi \in \mathcal{C}} \half \{\phi(y'_{\xi+1}) + \phi(y'_\xi)\},
\end{equation}
where $[0,1] = \mathcal{A} \cup \mathcal{C}$ is a partition of $[0,1]$.

Our blended quasinonlocal QC (BQNL) energy is based on a smooth blending of the atomistic and continuum bond energies
used to define the QNL energy~\eqref{eqn: qnl energy}.
Let $\eta : \mathbb{Z} \rightarrow [0,1]$ be a blending function.
We define
\begin{equation}\label{eqn: bqnl energy}
  \bqnl(y) := \eps \periodsum \p(y'_{\xi}) + \eta_\xi \p(y'_{\xi} + y'_{\xi+1}) + \frac{1-\eta_\xi}{2} \{\p(2y'_{\xi}) + \p(2y'_{\xi+1}) \}.
\end{equation}
We remark that the QNL energy is the BQNL energy whose blending function is the characteristic function of the atomistic region $\mathcal{A}$.

We will now give a simple proof that BQNL passes the patch test.
We discuss the modeling error of BQNL in more detail in Section~\ref{subsec: modeling}.
The patch test consistency of BQNL is a consequence of the following result.
\begin{lemma}\label{lem: bqc is affine closure of qnl}
The set of BQNL energies is the affine hull of the set of QNL energies.
In particular, any BQNL energy may be expressed as an affine combination of QNL energies with different atomistic and continuum regions.
\end{lemma}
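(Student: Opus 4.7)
The plan is to exploit the affine dependence of the BQNL energy on its blending function. Write $\Phi^{bqnl}_{\eta}(y)$ for the BQNL energy~\eqref{eqn: bqnl energy} with blending $\eta:\{1,\dots,N\}\to[0,1]$, and $\Phi^{qnl}_{\mathcal{A}}(y)$ for the QNL energy~\eqref{eqn: qnl energy} with atomistic region $\eps\mathcal{A}$. First I would rewrite~\eqref{eqn: bqnl energy} as
\begin{equation*}
\Phi^{bqnl}_{\eta}(y) \;=\; \Phi^{cb}(y) \;+\; \eps \periodsum \eta_\xi \left[\p(y'_\xi + y'_{\xi+1}) - \half\{\p(2y'_\xi) + \p(2y'_{\xi+1})\}\right],
\end{equation*}
which makes it transparent that $\eta\mapsto\Phi^{bqnl}_{\eta}$ is affine: whenever $\eta = \sum_i \lambda_i \eta^{(i)}$ with $\sum_i\lambda_i = 1$, we have $\Phi^{bqnl}_{\eta} = \sum_i \lambda_i \Phi^{bqnl}_{\eta^{(i)}}$. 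Next I would observe that $\Phi^{bqnl}_{\chi_{\mathcal{A}}} = \Phi^{qnl}_{\mathcal{A}}$ for every $\mathcal{A}\subseteq\{1,\dots,N\}$, with the special case $\mathcal{A} = \emptyset$ reducing to $\Phi^{qnl}_{\emptyset} = \Phi^{cb}$. Together these observations reduce the lemma to the purely combinatorial claim that every $\eta:\{1,\dots,N\}\to[0,1]$ can be written as an affine combination of characteristic functions of subsets of $\{1,\dots,N\}$.

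To prove the combinatorial claim I would use a layer-cake decomposition. Let $0 = v_0 < v_1 < \cdots < v_m \leq 1$ enumerate the distinct values taken by $\eta$, adjoining $v_0:=0$ to the list if $\eta$ never vanishes, and define the super-level sets $A_i := \{\xi\in\{1,\dots,N\} : \eta_\xi \geq v_i\}$ for $i=1,\dots,m$. Evaluating at any site where $\eta_\xi = v_j$ gives
\begin{equation*}
\sum_{i=1}^m (v_i - v_{i-1})\chi_{A_i}(\xi) \;=\; \sum_{i=1}^j (v_i - v_{i-1}) \;=\; v_j \;=\; \eta_\xi,
\end{equation*}
so $\eta = \sum_{i=1}^m (v_i-v_{i-1})\chi_{A_i}$. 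The coefficients are nonnegative and sum to $v_m \leq 1$, so I would pad with $(1-v_m)\chi_{\emptyset} \equiv 0$ to obtain an affine (in fact convex) combination whose coefficients sum to $1$. Invoking the affinity from the first step then yields
\begin{equation*}
\Phi^{bqnl}_{\eta} \;=\; (1-v_m)\,\Phi^{qnl}_{\emptyset} \;+\; \sum_{i=1}^m (v_i - v_{i-1})\,\Phi^{qnl}_{A_i},
\end{equation*}
the desired affine combination of QNL energies with distinct atomistic and continuum regions.

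I do not anticipate any serious obstacle in this argument. The affine dependence on $\eta$ drops out directly from~\eqref{eqn: bqnl energy}, and the layer-cake identity is standard. The only point requiring a little care is the padding by $\chi_{\emptyset}$, which promotes the raw super-level decomposition (a nonnegative combination whose coefficients sum only to $v_m$) into a genuine affine combination summing to $1$. Once the combinatorial identity is in place, the ``in particular'' conclusion is immediate, and the reverse inclusion in the set equality is tautological from the affinity together with the fact that every QNL energy is itself a BQNL energy.
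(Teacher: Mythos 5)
Your proof is correct, and while it rests on the same key observation as the paper's --- namely that subtracting $\Phi^{cb}$ makes the dependence of $\bqnl$ on the blending function linear, so the whole problem reduces to writing $\eta$ as an affine combination of characteristic functions --- the decomposition you choose is genuinely different. The paper writes $\eta = \sum_{i=1}^N \eta_i e_i$ using the singleton indicators $e_i = \chi_{\{i\}}$ and absorbs the deficit into the Cauchy--Born energy, obtaining $\bqnl = \sum_i \eta_i \Phi^{qnl}_{e_i} + (1-\sum_i \eta_i)\Phi^{cb}$; this is completely explicit and needs no case analysis, but the coefficient $1-\sum_i\eta_i$ is typically negative (whenever $\sum_i\eta_i>1$, which happens for any blending function with a nontrivial atomistic region), so it is an affine combination that is genuinely not convex. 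Your layer-cake decomposition over the super-level sets $A_i$ instead produces nonnegative coefficients summing to one, i.e.\ a \emph{convex} combination of QNL energies, which is a strictly stronger conclusion and connects more directly to Remark~\ref{rem: construction of consistent blended energies}, where patch test consistent blended methods are constructed precisely as convex combinations of sharp-interface energies; the price is the bookkeeping with the distinct values $v_0<\cdots<v_m$ and the padding by $\chi_{\emptyset}$. One shared caveat: your closing claim that the reverse inclusion is ``tautological from the affinity'' implicitly asserts that the set of BQNL energies is closed under affine combinations, which is delicate because an affine combination of $[0,1]$-valued blending functions need not be $[0,1]$-valued; the paper leaves exactly this point to the reader, so you are no less rigorous than the original, but it is worth being aware that the forward inclusion (every BQNL energy lies in the affine hull of the QNL energies) is the substantive content, and that part of your argument is complete.
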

\begin{proof}
The reader may verify that the set of BQNL energies is an affine space.
Thus, to prove the Lemma it suffices to express every BQNL energy as an
affine linear combination of QNL energies with different atomistic and continuum regions.
Let $\Phi^{qnl}_\beta$ be the BQNL energy with blending function $\beta$,
and let $\Phi^{qnl}_{e_i}$ be the QNL energy with atomistic region $\A = \{i\}$
and continuum region $\C = \{1, \dots, N\} \setminus \{i\}$ for $i = 1, \dots, N$.
We compute
\begin{equation*}
\Phi^{qnl}_{e_i}(y) - \Phi^{cb}(y) = \eps \left ( \p(y'_i + y'_{i+1})-\half\{\p(2y'_i)+\p(2y'_{i+1})\} \right ),
\end{equation*}
and so we derive
\begin{equation*}
\begin{split}
\bqnl(y) - \Phi^{cb}(y) &= \eps \periodsum \beta_\xi \left (\p(y'_{\xi} + y'_{\xi+1})-\half\{\p(2y'_\xi)+\p(2y'_{\xi+1})\} \right) \\
                        &= \sum_{i=1}^N \beta_i (\Phi^{qnl}_{e_i}(y) - \Phi^{cb}(y))
                        = \sum_{i=1}^N \beta_i \Phi^{qnl}_{e_i}(y) - \Phi^{cb}(y)\sum_{i=1}^N \beta_i.
\end{split}
\end{equation*}
Then we have
\begin{equation*}
\bqnl(y) = \sum_{i=1}^N \beta_i \Phi^{qnl}_{e_i}(y) + (1-\sum_{i=1}^N \beta_i) \Phi^{cb}(y).
\end{equation*}
This expresses $\bqnl$ as an affine combination of QNL energies,
since we recall that the Cauchy-Born energy, $\Phi^{cb}$, is the QNL energy whose continuum region is the entire domain.
\end{proof}

In light Lemma~\ref{lem: bqc is affine closure of qnl},
one expects that BQNL will inherit many of the properties of the QNL energy.
In particular,
since any BQNL energy is an affine combination of QNL energies,
and since the QNL energy is patch test consistent~\cite{Shimokawa:2004},
the BQNL energy passes the patch test.
Moreover, we show in Remark~\ref{rem: accuracy of critical strain} that the BQNL energy predicts the critical strain of the atomistic energy as accurately as the QNL method.

\begin{remark}\label{rem: construction of consistent blended energies}
(Construction of Patch Test Consistent Blended Methods).
Lemma~\ref{lem: bqc is affine closure of qnl} suggests a general method for constructing patch test consistent, blended methods from patch test consistent methods with a sharp interface:
one can define a patch test consistent, blended method by taking a convex combination of patch test consistent energies with different atomistic and continuum regions.
Of course, it may be that the method so constructed is not practical.
Nevertheless, we feel that this observation could be useful in deriving patch test consistent, blended couplings.
\end{remark}

For our analysis, it will be convenient to define a single energy which incorporates both BQCE and BQNL as special cases.
We will use the
\emph{blended quasicontinuum (BQC) energy}
\begin{equation}\label{def: blended energy}
\bqcab(y) := \eps \periodsum \p(y'_{\xi}) + \axi \p (2 y'_{\xi}) + \beta_\xi \p (y'_{\xi} + y'_{\xi+1})
\end{equation}
where $\alpha, \beta: \mathbb{Z} \rightarrow [0,1]$ are
\emph{blending functions}.
We observe that the BQCE energy with blending function $\gamma$ is the same as the BQC energy with blending functions
\begin{equation} \label{eqn: alpha and beta for BQCE}
\alpha_\xi := \overline{\gamma_\xi} := \frac{\gamma_\xi + \gamma_{\xi -1}}{2} \quad\mbox{and}\quad
\beta_\xi := 1 - \frac{\gamma_{\xi+1} + \gamma_{\xi-1}}{2}.
\end{equation}
The BQNL energy with blending function $\eta$ is the BQC energy with blending functions
\begin{equation}\label{eqn: alpha and beta for BQNL}
\alpha_\xi := 1 - \overline{\eta_\xi} := 1- \frac{\eta_\xi + \eta_{\xi-1}}{2} \quad\mbox{and}\quad
\beta_\xi := \eta_\xi.
\end{equation}

\subsection{Summary of notation and auxiliary theorems}\label{subsec: notation}
Here we collect all the notation and auxiliary theorems which
we will use below. The reader may feel free to skim this
section at first and return only as necessary.

For differences, we write
\begin{alignat*}{2}
\Delta y_\xi :&= y_\xi - y_{\xi-1}, &
\Delta^2 y_\xi :&= y_{\xi+1} - 2 y_\xi + y_{\xi-1}, \\
y'_\xi :&= \frac{y_\xi - y_{\xi-1}}{\eps}, &
y''_\xi :&= \frac{y_{\xi+1} - 2y_\xi + y_{\xi-1}}{\eps^2},  \\
y'''_\xi :&= \frac{y_{\xi+1} - 3y_\xi + 3y_{\xi-1} - y_{\xi-2}}{\eps^3}.
\end{alignat*}
For means, we write
\begin{equation*}
\overline{y}_\xi := \frac{y_\xi + y_{\xi-1}}{2}.
\end{equation*}
For $y \in \Y$, we will think of $y'$, $y''$, $y'''$, and $\overline{y}$ as N-periodic functions defined on the reference configuration.

We will also use certain bounds on the potential function
$\phi$ and its derivatives:
\begin{equation*}
C_i(r_0) := \sup_{r \geq r_0} |\phi^{(i)} (r)| \mbox{ for } i = 1, \dots, 4.
\end{equation*}
We will denote the convex hull of the set $A$ by
\begin{equation*}
 \mbox{conv }A.
\end{equation*}

We define the following norms
\begin{alignat*}{2}
\norm{y}_{\lpe} :&= \left \{ \eps \periodsum |y_\xi|^p \right \}^{\frac{1}{p}} \mbox{ for } p \in [1,\infty), &\qquad
\norm{y}_{\ell^\infty} :&= \max_{\xi=1,\dots,N} |y_\xi|,  \\
\norm{y}_{\U{1}{p}} :&= \norm{y'}_{\lpe} \mbox{ for } p \in  [1,\infty].
\end{alignat*}
Correspondingly, we let $\lpe$ denote the space $\mathcal{U}$
equipped with the norm $\norm{\cdot}_{\lpe}$, and we let
$\U{1}{p}$ denote the space $\mathcal{U}$ equipped with the
norm $\norm{\cdot}_{\U{1}{p}}$.
Additionally, let $Y^*$ denote the topological dual of the Banach
space $Y$, and let $\U{-1}{p} := (\U{1}{q})^*$ where $p,q \in [1,\infty]$
with $\frac{1}{p}+\frac{1}{q}=1$.
We let $\A$ denote the atomistic region, $\C$ denote the
continuum region, and $\I$ denote the interface. We let
$\norm{\cdot}_{\lpe(\mathcal{P})}$ be the $\norm{\cdot}_{\lpe}$
norm taken over the set $\mathcal{P}$ for $\mathcal{P} = \A, \C, \mbox{ or } \I$.
We denote the closed ball of radius $r$ at $x$ in $X$ by
\begin{equation*}
B_X(x,r) := \{y \in X: \norm{y - x}_X \leq r\}
\end{equation*}
for $X$ one of the spaces $\lpe$ or $\U{1}{p}$.

We will write $\fv \Psi$ for the first variation of a differentiable energy functional $\Psi$.
The first variation is a map from $\Y$ into $\mathcal{U}^*$; we will let
$\fv \Psi(y)[u]$ denote the first variation of $\Psi$ at the deformation $y \in \Y$
evaluated on the test function $u \in \mathcal{U}$.
We will use the letter $u$ to denote a test function belonging to $\mathcal{U}$ throughout the remainder of the paper.
The letter $y$ will be used to denote a deformation belonging to $\Y$.
We warn the reader that in expressions such as $\fv \Psi(y)[u]$,
$u$ denotes an arbitrary test function, not the displacement corresponding to the deformation $y$.
Similarly, we will let $\fv^2 \Psi$ denote
the second variation of $\Psi$. The second variation can be interpreted either as a map from
$\Y$ into the space of bilinear forms on $\mathcal{U}$, or as a map
from $\Y$ into $L(\mathcal{U}, \mathcal{U}^*)$. We will let $\fv^2 \Psi(y)[u,v]$ denote the
second variation of $\Psi$ at $y \in \Y$ evaluated on the test functions
$u,v \in \mathcal{U}$.

We will need the following version of the Inverse Function Theorem
which appears as Lemma~1 in~\cite{ortner:qnl1d}.
\begin{theorem}\label{thm: ift}
\emph{(Inverse Function Theorem)} Let $X$ and $Y$ be Banach
spaces, let $A$ be an open subset of $X$, and let $\F :A
\rightarrow Y$ be a $C^1$ function. Let $x_0 \in A$ and
suppose:
\begin{enumerate}
\item $\norm{\F (x_0)}_Y \leq \eta$,
and $\bignorm{\left (\fv \F(x_0)\right )^{-1}}_{L(Y,X)} \leq \sigma;$
\item $\overline{B_X(x_0, 2 \eta \sigma)} \subset A;$
\item $\norm{\fv \F(x_1) - \fv \F(x_2)}_ {L(X,Y)} \leq L$ for
    all $x_1, x_2 \in X$ with $\norm{x_1 - x_2}_X \leq 2 \eta \sigma$;
\item
$2 L \sigma^2 \eta < 1$.
\end{enumerate}
Then there exists $x \in X$ so that $\F(x) = 0$ and $\norm{x - x_0}_X < 2 \eta \sigma$.
\end{theorem}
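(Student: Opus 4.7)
The plan is a Newton--Kantorovich style fixed-point argument. I define the Newton-like map $T : \overline{B_X(x_0, 2\eta\sigma)} \to X$ by
\begin{equation*}
T(x) := x - (\fv\F(x_0))^{-1} \F(x),
\end{equation*}
which is well-defined by hypotheses (1) and (2). Fixed points of $T$ are precisely the zeros of $\F$, so it suffices to apply the Banach fixed-point theorem on $\overline{B_X(x_0, 2\eta\sigma)}$. The two things to verify are that $T$ maps this closed ball into itself and that $T$ is a strict contraction; in both estimates hypothesis (4) is the smallness assumption that closes the inequalities.

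For self-mapping I would use the integral Taylor expansion
\begin{equation*}
\F(x) = \F(x_0) + \fv\F(x_0)(x - x_0) + \int_0^1 \bigl[\fv\F(x_0 + t(x-x_0)) - \fv\F(x_0)\bigr](x - x_0)\, dt,
\end{equation*}
which is valid because the segment from $x_0$ to $x$ lies in $\overline{B_X(x_0, 2\eta\sigma)} \subset A$ by convexity and hypothesis (2). Substituting this into the definition of $T$ yields
\begin{equation*}
T(x) - x_0 = -(\fv\F(x_0))^{-1} \F(x_0) - (\fv\F(x_0))^{-1} \int_0^1 \bigl[\fv\F(x_0 + t(x-x_0)) - \fv\F(x_0)\bigr](x-x_0)\, dt.
\end{equation*}
The first term is bounded in norm by $\sigma\eta$ via (1), and the integrand in the second term is controlled by hypothesis (3), since $x_0 + t(x-x_0)$ and $x_0$ are within distance $\|x-x_0\|_X \leq 2\eta\sigma$ of each other. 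Summing the two contributions and invoking (4) keeps $\|T(x) - x_0\|_X \leq 2\eta\sigma$.

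For the contraction estimate the linear part cancels, giving
\begin{equation*}
T(x_1) - T(x_2) = -(\fv\F(x_0))^{-1} \int_0^1 \bigl[\fv\F(x_2 + t(x_1 - x_2)) - \fv\F(x_0)\bigr](x_1 - x_2)\, dt.
\end{equation*}
For $x_1, x_2 \in \overline{B_X(x_0, 2\eta\sigma)}$, each point $x_2 + t(x_1 - x_2)$ lies in the closed ball by convexity, so hypotheses (3) and (1) yield $\|T(x_1) - T(x_2)\|_X \leq 2L\sigma^2\eta \, \|x_1 - x_2\|_X$, a strict contraction by (4). Banach's theorem then produces a unique fixed point $x^*$ in the closed ball with $\F(x^*) = 0$, and the strict bound $\|x^* - x_0\|_X < 2\eta\sigma$ is obtained by summing the geometric series generated by Picard iteration $x_{n+1} := T(x_n)$ starting at $x_0$, using $\|x_1 - x_0\|_X \leq \sigma\eta$ together with the contraction constant. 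The main delicate point is the arithmetic that reconciles the variation estimate in hypothesis (3) with the smallness assumption (4); everything else is a routine application of the Banach fixed-point theorem and integral Taylor expansion in Banach spaces.
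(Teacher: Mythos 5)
The paper does not actually prove Theorem~\ref{thm: ift}; it imports the statement verbatim as Lemma~1 of~\cite{ortner:qnl1d}. So there is no in-paper argument to compare against, but your proof is the standard one (simplified-Newton/chord map plus Banach fixed point) and is essentially the argument behind the cited lemma; it is correct modulo two small points. First, your contraction constant $2L\sigma^2\eta$, and likewise your self-mapping estimate, only come out if hypothesis~(3) is read as a Lipschitz bound, $\norm{\fv\F(x_1)-\fv\F(x_2)}_{L(X,Y)}\leq L\norm{x_1-x_2}_X$: you need $\norm{\fv\F(x_2+t(x_1-x_2))-\fv\F(x_0)}\leq L\cdot 2\eta\sigma$, i.e.\ the extra factor $\norm{x_1-x_2}_X\leq 2\eta\sigma$ must actually be harvested. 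As literally printed, (3) only bounds the oscillation of $\fv\F$ by $L$, which would give contraction constant $\sigma L$ and a self-map defect $\sigma L\cdot 2\eta\sigma$, neither of which is controlled by~(4); the Lipschitz reading is clearly the intended one (it is exactly how $L$ is produced and fed into $2L\sigma^2\eta<1$ in Step~3--4 of the proof of Theorem~\ref{thm: a priori existence for BQCE}), but you should state explicitly where $\norm{x_1-x_2}_X\leq 2\eta\sigma$ is used. Second, your route to the strict inequality $\norm{x^*-x_0}_X<2\eta\sigma$ via the geometric series gives only $\norm{x^*-x_0}_X\leq\sigma\eta/(1-2L\sigma^2\eta)$, which is $<2\eta\sigma$ only when $2L\sigma^2\eta<\tfrac12$, not under the stated $2L\sigma^2\eta<1$. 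This is easily repaired with what you already have: the Taylor/self-mapping estimate gives $\norm{T(x)-x_0}_X\leq\sigma\eta+\tfrac{\sigma L}{2}\norm{x-x_0}_X^2\leq\sigma\eta\left(1+2L\sigma^2\eta\right)<2\eta\sigma$ on the closed ball, so $T$ maps the closed ball into the open ball and the fixed point $x^*=T(x^*)$ automatically satisfies the strict bound.
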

In Section~5, we will use Theorem~\ref{thm: ift} to
prove {\em a priori} existence with error estimates for the
BQCE and BQNL energies. We will apply Theorem~\ref{thm: ift}
using bounds on $\eta$ derived from the modeling estimates in
Section~\ref{subsec: modeling} and bounds on $\sigma$ derived
from the stability estimates given in Section~4.
\section{Modeling Error for the BQC method}\label{subsec: modeling}

In Theorem~\ref{thm: modeling}, we estimate the $\U{-1}{p}$ norms of
the modeling errors of the BQCE and BQNL energies.
\begin{theorem}[Modeling error in $\U{-1}{p}$] \label{thm: modeling}
Let $y \in \Y$ with $\min_{\xi \in \mathbb{Z}}
  y'_\xi >0$.
\begin{enumerate}
\item
 Let $\bqce$ be the BQCE energy with blending function $\gamma$.
  Recall from equation~\eqref{eqn: alpha and beta for BQCE} that $\bqce$ is the BQC energy with blending functions
  $\alpha_\xi := \overline{\gamma_\xi}$,
  and $\beta_\xi := 1 - \frac{\gamma_{\xi+1} + \gamma_{\xi-1}}{2}$. We have
 \begin{equation} \label{eqn: modeling for bqce}
 \begin{split}
   \umpnorm{\fv\nl(y) - \fv\bqce(y)} &\leq \bar{C}_1 \norm{\Delta^2\alpha_\xi}_{\lpe} + \eps \Ctwo\norm{\Delta \beta_\xi y''_\xi}_{\lpe} \\
   &+ \eps^2 \{\Ctwo\norm{(1-\beta_{\xi-1})y'''_\xi}_{\lpe} +\Cthr\norm{(1-\beta_{\xi})(y''_\xi)^2}_{\lpe} \}
 \end{split}
 \end{equation}
where $\bar{C}_i:= C_i(r^{min})$,
$i = 1,2,3,$ for $r^{min}:=2\,\min_{\xi \in \mathbb{Z}} y'_\xi.$
\item
Let $\bqnl$ be the BQNL energy with blending function $\beta$. We have
 \begin{equation} \label{eqn: modeling for bqnl}
 \begin{split}
   \umpnorm{\fv\nl(y) - \fv\bqnl(y)} &\leq \eps \Ctwo\norm{\Delta \beta_\xi y''_\xi}_{\lpe} \\
   &+ \eps^2 \{\Ctwo\norm{(1-\beta_{\xi-1})y'''_\xi}_{\lpe} +\Cthr\norm{(1-\beta_{\xi})(y''_\xi)^2}_{\lpe} \}
 \end{split}
 \end{equation}
where $\bar{C}_i:= C_i(r^{min})$,
$i = 2,3,$ for $r^{min}:=2\,\min_{\xi \in \mathbb{Z}} y'_\xi.$
\end{enumerate}
\end{theorem}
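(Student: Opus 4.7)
The plan is to exploit the dual characterization $\umpnorm{R} = \sup_{\norm{u}_{\U{1}{q}}\leq 1} |R[u]|$ with $1/p + 1/q = 1$. Writing the residual $R[u] := \fv\nl(y)[u] - \fv\bqcab(y)[u]$ in the form $R[u] = \eps \periodsum T_\xi u'_\xi$ and applying H\"older's inequality gives $\umpnorm{R} \leq \norm{T}_{\lpe}$, so the task reduces to estimating the $\lpe$ norm of a single pointwise residual $T_\xi$. I will handle the BQCE and BQNL estimates simultaneously by working with the general BQC energy $\bqcab$ and then specializing via the identifications \eqref{eqn: alpha and beta for BQCE} and \eqref{eqn: alpha and beta for BQNL}.

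First, I would differentiate \eqref{def: nl energy} and \eqref{def: blended energy} directly, then shift the index on the $u'_{\xi+1}$ contributions of the second-neighbor terms by periodicity to collect everything against $u'_\xi$. This yields
\[
T_\xi = (1-\beta_\xi)\,\p'(y'_\xi + y'_{\xi+1}) + (1-\beta_{\xi-1})\,\p'(y'_{\xi-1} + y'_\xi) - 2\axi\, \p'(2y'_\xi).
\]
The key step is then a Taylor expansion of both second-neighbor terms around the point $2y'_\xi$, using the identities $y'_{\xi+1} - y'_\xi = \eps y''_\xi$ and $y'_{\xi-1} - y'_\xi = -\eps y''_{\xi-1}$. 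Collecting terms by powers of $\eps$ gives a clean decomposition $T_\xi = T^{(0)}_\xi + T^{(1)}_\xi + T^{(2)}_\xi$ with
\begin{align*}
T^{(0)}_\xi &= \p'(2y'_\xi)\bigl[(1-\beta_\xi) + (1-\beta_{\xi-1}) - 2\axi\bigr], \\
T^{(1)}_\xi &= \eps\, \p''(2y'_\xi)\bigl[(1-\beta_\xi)\, y''_\xi - (1-\beta_{\xi-1})\, y''_{\xi-1}\bigr],
\end{align*}
and $T^{(2)}_\xi$ collects the second-order Taylor remainders, each of the form $\frac{1}{2}\eps^2 (1-\beta_\bullet)(y''_\bullet)^2\,\p'''(\zeta_\bullet)$ with $\zeta_\bullet$ in the convex hull of $\{2y'_\xi,\, y'_\xi + y'_{\xi\pm 1}\}$.

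These three pieces correspond exactly to the three modeling contributions in the bound. For $T^{(0)}$, a direct computation using \eqref{eqn: alpha and beta for BQCE} shows $(1-\beta_\xi) + (1-\beta_{\xi-1}) - 2\axi = \Delta^2 \alpha_\xi$ in the BQCE case, producing the $\bar{C}_1 \norm{\Delta^2 \alpha_\xi}_{\lpe}$ contribution; the same expression vanishes identically under \eqref{eqn: alpha and beta for BQNL}, consistent with the patch test consistency of BQNL established in Lemma~\ref{lem: bqc is affine closure of qnl}. For $T^{(1)}$, I would invoke the discrete product rule $\Delta(fg)_\xi = \Delta f_\xi\, g_\xi + f_{\xi-1}\Delta g_\xi$ with $f = 1-\beta$ and $g = y''$, together with $\Delta y''_\xi = \eps y'''_\xi$, to rewrite
\[
(1-\beta_\xi)\, y''_\xi - (1-\beta_{\xi-1})\, y''_{\xi-1} = -\Delta\beta_\xi\, y''_\xi + \eps\, (1 - \beta_{\xi-1})\, y'''_\xi,
\]
yielding the $\eps \Ctwo \norm{\Delta\beta_\xi y''_\xi}_{\lpe}$ and $\eps^2 \Ctwo \norm{(1-\beta_{\xi-1}) y'''_\xi}_{\lpe}$ contributions. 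The remainder $T^{(2)}_\xi$ produces the $\eps^2 \Cthr \norm{(1-\beta_\xi)(y''_\xi)^2}_{\lpe}$ contribution after absorbing the $\xi-1$ indexed remainder into the $\xi$ indexed one by a final index shift using periodicity.

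To conclude, the triangle inequality on $\norm{T}_{\lpe}$ separates the four contributions above, and the uniform bounds $|\p^{(i)}(r)| \leq \bar{C}_i$ for $r \geq r^{min}$ apply because each argument $2y'_\xi$, $y'_\xi+y'_{\xi+1}$, $y'_{\xi-1}+y'_\xi$ and every Taylor intermediate point lies in the convex hull of arguments $\geq r^{min}$. The main technical obstacle I expect is the algebraic identity $(1-\beta_\xi) + (1-\beta_{\xi-1}) - 2\axi = \Delta^2\alpha_\xi$ in the BQCE case: unwinding the two layers of averaging implicit in \eqref{eqn: alpha and beta for BQCE} is what produces the $\Delta^2$ (rather than $\Delta$) of $\alpha$, and this single identity is the source of the improved $k^{-3/2}$ strain-error scaling highlighted in the introduction.
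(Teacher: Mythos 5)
Your proposal is correct and follows essentially the same route as the paper's proof: collecting the residual against $u'_\xi$, Taylor-expanding the second-neighbor terms at $2y'_\xi$, applying the discrete product rule to the order-$\eps$ term, verifying the identity $2\{1-\axi-\bbar_\xi\}=\Delta^2\alpha_\xi$ for BQCE (and its vanishing for BQNL), and concluding with H\"older's inequality. All the key identities you state check out against the paper's computation.
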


\begin{proof}
We begin by writing down the first variations of $\bqcab$ and
$\nl$. For $y \in \Y$ and $u \in \U{1}{p},$ we have
\begin{equation*}
\begin{split}
\fv\bqcab(y)[u] &= \eps \periodsum\p'(y'_\xi)u'_\xi +2\axi\p'(2y'_\xi)u'_\xi +\beta_\xi \p'(y'_\xi + y'_{\xi + 1})(u'_\xi + u'_{\xi + 1})\\
              &= \eps \periodsum \{ \p'(y'_\xi) +2\axi\p'(2y'_\xi) +\beta_{\xi-1}\p'(y'_{\xi-1}+y'_{\xi}) +\beta_\xi\p'(y'_\xi+y'_{\xi+1}) \}u'_\xi.
\end{split}
\end{equation*}
The first variation of $\nl$ is a special case of the above. We have
\begin{equation*}
\fv\nl(y)[u] = \eps \periodsum \{ \p'(y'_\xi) +\p'(y'_{\xi-1} + y'_{\xi})+\p'(y'_\xi + y'_{\xi + 1}) \}u'_\xi.
\end{equation*}
Using these formulas we compute the {\em modeling error}
\begin{align}\label{eqn: first truncation formula}
M[u] :&= \fv\nl(y)[u]-\fv\bqcab(y)[u]\\
     & = \eps\periodsum \left [ \{(1-\beta_{\xi-1})\p'(y'_{\xi-1}+y'_{\xi})-\axi\p'(2y'_{\xi})\}
                          +  \{(1-\beta_{\xi})\p'(y'_{\xi} + y'_{\xi+1})-\axi \p'(2y'_{\xi})\} \right ] u'_\xi.\notag
\end{align}
Now we expand the terms
\begin{equation*}
G_\xi := \{(1-\beta_{\xi-1})\p'(y'_{\xi-1}+y'_{\xi})-\axi \p'(2y'_{\xi})\} \quad\text{and}\quad
H_\xi := \{ (1-\beta_{\xi})\p'(y'_{\xi} + y'_{\xi+1})-\axi \p'(2y'_{\xi}) \}
\end{equation*}
in~\eqref{eqn: first truncation formula} at $2y'_\xi$ using Taylor's
theorem. We obtain
\begin{align}
G_\xi &= (1-\beta_{\xi-1}- \axi)\p'(2y'_{\xi})
    +(1-\beta_{\xi-1})(-\eps\p''(2y'_{\xi})y''_{\xi-1}+\frac{\eps^2}{2}\p'''(\oc_{1,\xi})(y''_{\xi-1})^2) \mbox{\quad and}\label{eqn: modeling A}\\
H_\xi &= (1-\beta_{\xi}-\axi)\p'(2 y'_{\xi})
    +(1-\beta_{\xi})(\eps\p''(2y'_{\xi})y''_\xi+\frac{\eps^2}{2}\p'''(\oc_{2,\xi})(y''_\xi)^2), \label{eqn: modeling B}
\end{align}
where $\oc_{1,\xi} \in \mbox{conv} \{2y'_\xi, y'_{\xi-1} +
y'_\xi\}$, and $\oc_{2,\xi} \in \mbox{conv}\{2y'_\xi,
y'_{\xi+1} + y'_\xi\}$. When we substitute~\eqref{eqn:
modeling A} and~\eqref{eqn: modeling B} into~\eqref{eqn:
first truncation formula}, we obtain
\begin{equation}\label{eqn: second truncation formula}
\begin{split}
M[u] &=\eps \periodsum 2 \{ 1 - \axi-\bbar_\xi \} \p'(2y'_\xi) u'_\xi \\
     & \quad +\eps \{ (1-\beta_\xi)\p''(2y'_\xi)y''_\xi -(1-\beta_{\xi-1})\p''(2y'_\xi)y''_{\xi-1} \} u'_\xi \\
     & \quad +\frac{\eps^2}{2} \{ (1-\beta_\xi)\p'''(\oc_{2,\xi})(y''_\xi)^2 +(1-\beta_{\xi-1})\p'''(\oc_{1,\xi})(y''_{\xi-1})^2 \} u'_\xi.
\end{split}
\end{equation}
Expanding the term of order $\eps$ on the right hand side of~\eqref{eqn: second truncation formula},
we compute
\begin{equation}\label{eqn: general truncation formula}
\begin{split}
 M[u] =  \eps \periodsum 2 &\{ 1 - \axi-\bbar_\xi \} \p'(2y'_\xi) u'_\xi  \\
 &+ \{-\eps \Delta\beta_\xi\p''(2y'_{\xi})y''_\xi + \eps^2 (1-\beta_{\xi-1}) \p''(2y'_{\xi})y'''_{\xi} \} u'_\xi \\
&+\frac{\eps^2}{2}\{(1-\beta_\xi)\p'''(\oc_{2,\xi})(y''_\xi)^2+(1-\beta_{\xi-1})\p'''(\oc_{1,\xi})(y''_{\xi-1})^2\}u'_\xi.
\end{split}
\end{equation}

We recall from equation~\eqref{eqn: alpha and beta for BQNL} that the BQNL energy $\bqnl$ with blending function $\beta$
is the BQC energy $\bqc$ with $\alpha_\xi = 1 - \overline{\beta_\xi}$.
In that case, we compute that for the BQNL energy $\bqnl$, the term of order zero in equation~\eqref{eqn: general truncation formula} vanishes,
and so
\begin{equation*}
\begin{split}
 \fv\nl(y)[u] - \fv\bqnl(y)[u] =  \eps \periodsum \{ - &\eps \Delta\beta_\xi\p''(2y'_{\xi})y''_\xi
 + \eps^2 (1-\beta_{\xi-1}) \p''(2y'_{\xi})y'''_{\xi} \} u'_\xi \\
+&\frac{\eps^2}{2}\{(1-\beta_\xi)\p'''(\oc_{2,\xi})(y''_\xi)^2+(1-\beta_{\xi-1})\p'''(\oc_{1,\xi})(y''_{\xi-1})^2\}u'_\xi.
\end{split}
\end{equation*}
Therefore, by H\"{o}lder's inequality,
\begin{equation*}
 |\fv\nl(y)[u] - \fv\bqnl(y)[u]| \leq \left \{ \eps \Ctwo\norm{\Delta \beta_\xi y''_\xi}_{\lpe} +
\eps^2 \Ctwo\norm{(1-\beta_{\xi-1})y'''_\xi}_{\lpe} +
\eps^2 \Cthr\norm{(1-\beta_{\xi})(y''_\xi)^2}_{\lpe}  \right \} \norm{u'}_{\lqe}
\end{equation*}
where $q := \frac{p}{p-1}$.
Thus,
\begin{equation*}
\umpnorm{\fv\nl(y)[u] - \fv\bqnl(y)[u]} \leq \eps \Ctwo\norm{\Delta \beta_\xi y''_\xi}_{\lpe} \\
   + \eps^2 \{\Ctwo\norm{(1-\beta_{\xi-1})y'''_\xi}_{\lpe} +\Cthr\norm{(1-\beta_{\xi})(y''_\xi)^2}_{\lpe} \}.
\end{equation*}
This proves the first claim made in the statement of the theorem.

On the other hand, for the BQCE energy $\bqce$ there is a ghost force
arising from the term of order zero on the right hand side of equation~\eqref{eqn: general truncation formula}.
Using formulas~\eqref{eqn: alpha and beta for BQCE}, we compute
\begin{equation}\label{eqn: ghost force}
2 \{1 - \axi-\bbar_\xi\} = \alpha_{\xi+1}-2\axi + \alpha_{\xi-1} = \Delta^2\alpha_\xi.
\end{equation}
Substituting this expression in equation~\eqref{eqn: general truncation formula}, we derive
\begin{equation*}
\begin{split}
 \fv\nl(y)[u] - \fv\bqce(y)[u] =  \eps \periodsum &\Delta^2\alpha_\xi \p'(2y'_\xi) u'_\xi \\
 &+\{ - \eps \Delta\beta_\xi\p''(2y'_{\xi})y''_\xi
 + \eps^2 (1-\beta_{\xi-1}) \p''(2y'_{\xi})y'''_{\xi} \} u'_\xi \\
&+\frac{\eps^2}{2}\{(1-\beta_\xi)\p'''(\oc_{2,\xi})(y''_\xi)^2+(1-\beta_{\xi-1})\p'''(\oc_{1,\xi})(y''_{\xi-1})^2\}u'_\xi.
\end{split}
\end{equation*}
Thus,
\begin{equation*}
\begin{split}
 |\fv\nl(y)[u] - \fv\bqce(y)[u]| &\leq \bigg\{ \bar{C}_1 \norm{\Delta^2\alpha_\xi}_{\lpe} + \eps \Ctwo\norm{\Delta \beta_\xi y''_\xi}_{\lpe} \\
   &+ \eps^2  \Ctwo\norm{(1-\beta_{\xi-1})y'''_\xi}_{\lpe}
   + \eps^2 \Cthr\norm{(1-\beta_{\xi})(y''_\xi)^2}_{\lpe}  \bigg\} \norm{u'}_{\lqe}.
\end{split}
\end{equation*}
This proves the first claim made in the statement of the theorem.
\end{proof}

\begin{remark}[BQNL is patch test consistent and BQCE is not patch test consistent]
Estimate~\eqref{eqn: modeling for bqnl} implies that BQNL is patch test consistent.
For observe that if $y^F$ is the uniform deformation,
then $(y^F)''_\xi = (y^F)'''_\xi = 0$ for all $\xi \in \mathbb{Z}$.
Thus, by~\eqref{eqn: modeling for bqnl},
$\norm{\fv \nl(y^F) - \fv \bqnl(y^F)}_{\U{-1}{p}} = \norm{\fv \bqnl(y^F)}_{\U{-1}{p}} = 0$.
On the other hand, observe that the term $\bar{C}_1 \norm{\Delta^2\alpha_\xi}_{\lpe}$
which appears on the right hand side of estimate~\eqref{eqn: modeling for bqce}
does not vanish under uniform strain.
This reflects the fact that BQCE is not patch test consistent.
\end{remark}

\begin{remark}[Interpretation of modeling estimates]\label{rem: clarification of modeling}
We will now give a detailed interpretation of the each term in estimates~\eqref{eqn: modeling for bqnl} and~\eqref{eqn: modeling for bqce}.
First, we consider the term
\begin{equation*}
\eps^2 \{\Ctwo\norm{(1-\beta_{\xi-1})y'''_\xi}_{\lpe} +\Cthr\norm{(1-\beta_{\xi})(y''_\xi)^2}_{\lpe} \}.
\end{equation*}
Observe that the function $1-\beta_\xi$ is supported in the interface and continuum, and that it is identically
one throughout the continuum. This suggests that the term arises from the error of the Cauchy-Born model.
Recall that the Cauchy-Born energy is the
BQNL energy with blending function $\beta_\xi = 0$.
Therefore, by the modeling estimate for BQNL given in
Theorem~\ref{thm: modeling} we have
\begin{equation*}
\norm{\fv\nl(y) - \fv\Phi^{cb}(y)}_{\U{-1}{p}} \leq \eps^2 \{\Ctwo\norm{y'''_\xi}_{\lpe} +\Cthr\norm{(y''_\xi)^2}_{\lpe}\}.
\end{equation*}
Consequently, we will call the term $\eps^2 \{\Ctwo\norm{(1-\beta_{\xi-1})y'''_\xi}_{\lpe} +\Cthr\norm{(1-\beta_{\xi})(y''_\xi)^2}_{\lpe} \}$
the \emph{Cauchy-Born error}.

Next, we consider the term $\eps \Ctwo\norm{\Delta \beta_\xi y''_\xi}_{\lpe}$.
 We observe that $\Delta\beta_\xi$ is supported
in the interface. Thus, the term $\eps \Ctwo\norm{\Delta \beta_\xi y''_\xi}_{\lpe}$
arises from the error caused by coupling the atomistic and
continuum models in the interface. We will call this term the
\emph{coupling error}.

Finally, we consider the term $\Cone\norm{\Delta^2\alpha_\xi}_\lpe$ which appears
in the modeling estimate for BQCE but not in the estimate for BQNL.
Let $y^F$ be the uniform deformation $y^F_\xi := F\eps \xi$.
We call $\fv\bqce(y^F)$ the \emph{ghost force} associated
with the energy $\bqce$, and we observe that under uniform strain
formula~\eqref{eqn: general truncation formula} reduces to
\begin{equation*}
\fv\bqce(y^F)[u] =  \eps \periodsum -\Delta^2\alpha_\xi \p'(2F) u'_\xi.
\end{equation*}
Thus, we see that $\p'(2F)\norm{\Delta^2\alpha_\xi}_\lpe$ is the $\U{-1}{p}$
norm of the ghost force. We will call $\Cone\norm{\Delta^2\alpha_\xi}_\lpe$
the \emph{ghost force error}.

\end{remark}

\begin{remark}[Dependence of ghost force error on interface size]\label{rem: ghost force}
We will now analyze the dependence of the ghost force error on the size of the interface and the shape of the blending function.
Our analysis will lead to an estimate for the rate at which the ghost force error decreases with the number of atoms in the interface,
and to an optimal family of blending functions for the BQCE method.
Consider the BQCE energy $\bqce$ with blending function $\gamma$ as depicted in Figure~\ref{fig:blendingfigure}.

\begin{figure}
  \begin{center}
    \includegraphics[width=11cm]{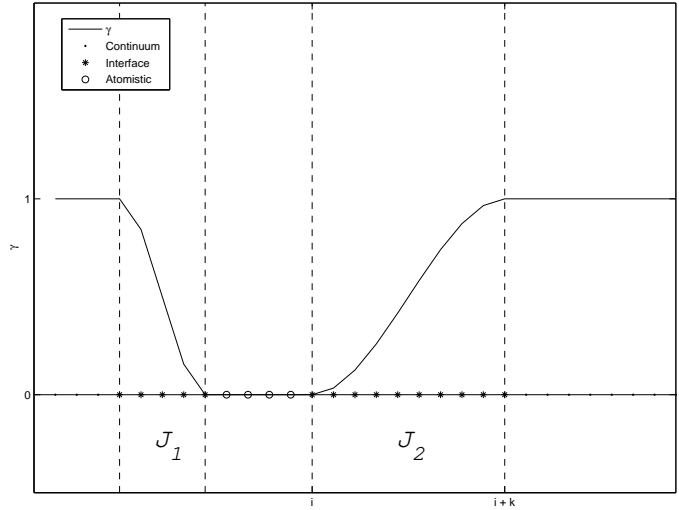}
    \caption{\label{fig:blendingfigure} Graph of the blending function $\gamma$
    with interface region $\mathcal{I} = \J_1 \cup \J_2$.
      }
  \end{center}
\end{figure}

Each transition between the atomistic and continuum models
results in some ghost force. We will consider the ghost force
which arises from a single transition from the atomistic model
to the continuum model. The total ghost force is, of course,
the sum of the ghost forces due to each transition. Let $\J_2
:= \{i, \dots, i + \kk\}$ be part of the interface, which we
will denote simply as $\J$ is the following.  Assume that
atoms $i-2, i-1,$ and $i$ are in the atomistic region, and that
atoms $i+\kk$ and $i+\kk+1$ are in the continuum. That is,
suppose that $\gamma(\xi) = 0$ for $\xi \in \{i-2, \dots, i
\}$, and that $\gamma(\xi) = 1$ for $\xi \in \{i + \kk, i + \kk
+ 1\}$. We will estimate the ghost force due to the transition
which occurs over region $\J$.

 First, we construct a  blending function which implements such a transition.
 Let $\gs: [0,1] \rightarrow [0,1]$ be a twice continuously
 differentiable function such that $\gs(0) = 0$, $\gs(1) =1$, and
 $\gs'(0) = \gs'(1) = 0$. Extend $\gs$ to a function defined on $\Real$
 by taking $\gs(x) = 0$ for $x \in (-\infty,0)$ and $\gs(x) = 1$ for $x \in (1,\infty)$.
 Now let $\J^+ := \{i-2, \dots, i+\kk+1\}$, and define $\gamma_\J :\J^+ \rightarrow [0,1]$ by
 $\gamma_\J(\xi) = \gs\left(\frac{\xi -i}{\kk} \right)$.
 We call $\gs$ the \emph{shape} of the blending function $\gamma_\J$.

 We will show
 \begin{equation*}
 \norm{\Delta^2\gamma_\J}_{\lpe(\J)} \leq \eps^{\frac{1}{p}} \kk^{\frac{1}{p}-2} \bignorm{\frac{d^2\gs}{dx^2}}_{L^p([0,1])}
 \end{equation*}
 for all $p \in [1,\infty]$.
 Suppose $p \in [1, \infty)$, and compute
\begin{equation}\label{eqn: quadrature sum}
 \norm{\Delta^2\gamma_\J}_{\lpe(\J)} = \eps^2 \norm{\gamma''_\J}_{\lpe(\J)}
 = \eps^{\frac{1}{p}} \kk^{\frac{1}{p}-2}  \left \{\frac{1}{\kk} \sum_{\xi = 0}^{\kk}
\bigabs{\frac{\gs \left(\frac{\xi+1}{\kk}\right) - 2\gs \left (\frac{\xi}{\kk} \right ) + \gs \left( \frac{\xi-1}{\kk} \right)}{\frac{1}{k^2}}
}^p \right \}^{\frac{1}{p}}.
\end{equation}
By Taylor's theorem,
\begin{equation}\label{eqn: expansion of second difference}
 \frac{\gs \left(\frac{\xi+1}{\kk}\right) - 2\gs \left (\frac{\xi}{\kk} \right ) + \gs \left( \frac{\xi-1}{\kk} \right)}{\frac{1}{k^2}}
 = \int_{\frac{\xi -1}{\kk}}^{\frac{\xi + 1}{\kk}} \frac{d^2\gs}{dx^2} (s) \tk_\xi(s) ds,
\end{equation}
where
\begin{equation*}
 \tk_\xi(s) :=
\begin{cases}
 -\kk^2 \bigabs{s -  \frac{\xi}{\kk}} + \kk & \mbox{if } s \in \left [\frac{\xi-1}{\kk},\frac{\xi+1}{\kk} \right], \\
 0 & \mbox{otherwise}.
\end{cases}
\end{equation*}
We note that~\eqref{eqn: expansion of second difference} holds for $\xi=i$ and $\xi = i+\kk$ only since we have
assumed that $\gs'(0) = \gs'(1) =0$.
Now observe that $\tk_\xi$ is a non-negative function whose mass is one, and that $x \mapsto |x|^p$
is convex for $p \geq 1$. Therefore, Jensen's inequality implies
\begin{equation*}
 \bigabs{\int_{\frac{\xi -1}{\kk}}^{\frac{\xi + 1}{\kk}} \frac{d^2\gs}{dx^2} (s) \tk_\xi(s) \, ds}^p
\leq \int_{\frac{\xi -1}{\kk}}^{\frac{\xi + 1}{\kk}} \bigabs{\frac{d^2\gs}{dx^2} (s)}^p \tk_\xi(s) \, ds.
\end{equation*}
This yields the estimate
\begin{align*}
\norm{\Delta^2\gamma_\J}_{\lpe(\J)} &\leq \eps^{\frac{1}{p}} \kk^{\frac{1}{p}-2}  \left \{ \sum_{\xi = 0}^{\kk}
       \int_{\frac{\xi -1}{\kk}}^{\frac{\xi + 1}{\kk}} \bigabs{\frac{d^2\gs}{dx^2} (s)}^p \frac{\tk_\xi(s)}{\kk} \, ds  \right \}^{\frac{1}{p}}
\\
&= \eps^{\frac{1}{p}} \kk^{\frac{1}{p}-2} \left \{
       \int_{0}^{1} \bigabs{\frac{d^2\gs}{dx^2} (s)}^p \sum_{\xi = 0}^{\kk} \frac{\tk_\xi(s)}{\kk} \, ds \right \}^{\frac{1}{p}}
= \eps^{\frac{1}{p}} \kk^{\frac{1}{p}-2} \bignorm{\frac{d^2\gs}{dx^2}}_{L^p([0,1])}.
\end{align*}
for all $p \in [1, \infty)$.
By a similar argument, one can show
\begin{equation*}
 \norm{\Delta^2\gamma_\J}_{\ell^\infty(\J)} \leq \kk^{-2} \bignorm{\frac{d^2\gs}{dx^2}}_{L^\infty([0,1])}.
\end{equation*}

Recall from Remark~\ref{rem: clarification of modeling}
that the ghost force error due to the transition over $\J$ is
\begin{equation*}
 \norm{\Delta^2\alpha_\J}_{\lpe(\J)} := \norm{\Delta^2 \bar{\gamma}_\J }_{\lpe(\J)}.
\end{equation*}
By Minkowski's inequality and the estimates above, we have
\begin{equation}
 \norm{\Delta^2\alpha_\J}_{\lpe(\J)} = \norm{\Delta^2 \bar{\gamma}_\J}_{\lpe(\J)}
                                          \leq \norm{\Delta^2\gamma_\J}_{\lpe(\J)}
					\leq  \eps^{\frac{1}{p}} \kk^{\frac{1}{p}-2} \bignorm{\frac{d^2\gs}{dx^2}}_{L^p([0,1])} \label{eqn: ghost force estimate},
\end{equation}
for all $p\in[1,\infty]$.
This gives the dependence of the ghost force error on $\eps$, the blending function,
and $\kk$. We remark that inequality~\eqref{eqn: ghost force estimate} is sharp.
In fact, if $\gs$ is sufficiently smooth, then $\norm{\Delta^2\alpha_\J}_{\lpe(\J)}$
converges to $\eps^{\frac{1}{p}} \kk^{\frac{1}{p}-2}\bignorm{\frac{d^2\gs}{dx^2}}_{L^p([0,1])}$ as
$\kk$ tends to infinity.

We pause here to explain why we assumed $\gs'(0) = \gs'(1) = 0$.
It can be shown that no estimate of order $\eps^{\frac{1}{p}} \kk^{\frac{1}{p}-2}$ holds unless $\gs'(0) = \gs'(1) = 0$.
We leave the proof of this fact to the reader, and will instead give an illustrative example.
Suppose that $\gs$ were the linear polynomial with $\gs(0) = 0$ and $\gs(1) = 1$, so $\gs$ is not differentiable at $0$ or $1$.
Then
\begin{equation}\label{eqn: ghost force scaling for linear blending}
\norm{\Delta^2\gamma_\J}_{\lpe(\J)} = 2^{\frac{1}{p}} \eps^{\frac{1}{p}} \kk^{-1}.
\end{equation}
In Section~\ref{subsec: a priori existence}, we use an estimate
of the $\U{-1}{2}$ norm of the modeling error in order to
obtain convergence results. Thus, we are particularly
interested in the $\U{-1}{2}$ norm of the ghost force error. By
estimate~\eqref{eqn: ghost force estimate}, if $\gs$ satisfies
$\gs'(0) = \gs'(1) = 0$ then the $\U{-1}{2}$ norm of the ghost
force error decreases with $\kk$ as $\kk^{-\frac{3}{2}}$. On
the other hand, if $\gs$ is the linear blending function then
we see that the ghost force error decreases as $\kk^{-1}$. We
conclude that if a large blending region is desired, it is best
to choose a blending function which satisfies $\gs'(0) =
\gs'(1) = 0$ so that the faster rate of decrease is obtained.
In particular, we suspect that the linear polynomial $\gs$ is a
poor choice for the shape of the BQCE blending function.

We will now use estimate~\eqref{eqn: ghost force estimate} to
derive an optimal shape for the blending function of the BQCE
energy. As discussed above, we will use the $\U{-1}{2}$
modeling estimate to prove our error results in
Section~\ref{subsec: a priori existence}. Thus, we would like
to find a family of blending functions which minimizes the
$\U{-1}{2}$ ghost force. Estimate~\eqref{eqn: ghost force
estimate} suggests that we should choose the shape $\gs$ which
minimizes $\bignorm{\frac{d^2 \gs}{dx^2}}_{L^2([0,1])}$ subject
to $\gs(0) = 0$, $\gs(1) = 1$, and $\gs'(0) = \gs'(1) = 0$. The
Euler-Lagrange equation for this minimization problem is
\begin{equation*}
 \frac{d^4 \gs}{dx^4}(x) = 0 \mbox{ for all } x \in (0,1).
\end{equation*}
Thus, the optimal shape $\gs$ is the cubic polynomial which
satisfies the constraints $\gs(0) = 0$, $\gs(1) = 1$,
 and $\gs'(0) = \gs'(1) = 0$.
\end{remark}
\begin{remark}[Dependence of coupling error on interface size]\label{rem: coupling error}
Now we examine the dependence of the coupling error
on the number of atoms in the interface and the shape of the blending function.
First, we consider the coupling error of the BQNL energy.
Following the notation established in Remark~\ref{rem: ghost force},
let $\J := \{i, \dots, i + \kk\}$ be contained in the interface.
Let $\beta_s: [0,1] \rightarrow [0,1]$ be a continuously differentiable function
with $\beta_s(0) = 0$ and $\beta_s(1) =1$. Extend $\beta_s$ to a function defined on $\Real$
by taking $\beta_s(x) = 0$ for $x \in (-\infty,0)$ and $\beta_s(x) = 1$ for $x \in (1,\infty)$.
Now let $\beta_\J: \{i, \dots, i + \kk\} \rightarrow [0,1]$ be defined by
$\beta_\J(\xi) = \beta_s(\frac{\xi- i}{\kk})$.
Using an argument similar to the one given in Remark~\ref{rem: ghost force},
one can show
\begin{equation*}
 \norm{\Delta \beta_\J}_{\lpe(\J)} \leq C_\beta \eps^{\frac{1}{p}} \kk^{\frac{1}{p}-1}
\end{equation*}
for all $p \in [1, \infty]$.
The result above yields the estimate
\begin{equation*}
\eps \norm{\Delta\beta_\J y''}_{\lpe(\J)} \leq \eps \norm{\Delta\beta_\J}_{\lpe(\J)} \norm{y''}_{\ell^\infty(\J)}
                                               \leq \eps^{1 + \frac{1}{p}} \kk^{\frac{1}{p}-1} C_\beta \norm{y''}_{\ell^\infty(\J)}.
\end{equation*}
This gives the dependence of the coupling error of the BQNL energy
on $\eps$, the blending function, and the size of the interface.

We now address the coupling error of the BQCE method.
Let the blending function, $\gamma_\J$, of the BQCE energy be defined as in Remark~\ref{rem: ghost force}.
We recall from equation~\eqref{eqn: alpha and beta for BQCE} that the BQCE energy with blending function $\gamma_\J$
is the BQC energy, $\Phi_{\alpha, \beta}$, with
\begin{equation*}
\beta_\xi = 1 - \frac{\gamma_{\xi+1} + \gamma_{\xi-1}}{2}.
\end{equation*}
Thus, using Minkowski's inequality and the result for the BQNL energy,
we make the estimate
\begin{equation}\label{eqn: scaled coupling error estimate}
\eps \norm{\Delta\beta_\J y''}_{\lpe(\J)} :=
 \eps \bignorm{\Delta \left (1 - \frac{\gamma_{\xi+1} + \gamma_{\xi-1}}{2} \right ) y''}_{\lpe(\J)}
  \leq \eps \norm{\Delta\gamma_\J y''}_{\lpe(\J)} \leq \eps^{1 + \frac{1}{p}} \kk^{\frac{1}{p}-1} C_\gamma \norm{y''}_{\ell^\infty(\J)}.
\end{equation}
This gives the dependence of the coupling error of the BQCE method
on $\eps$, the blending function, and the size of the interface.
\end{remark}
\begin{remark}[Higher order estimates]\label{higher}
Suppose we let the measure of the interface be a fixed fraction $\nu$ of the total
measure of the domain as $\eps$ tends to zero. Then the number of atoms $\kk$
in the interface would be approximately $\frac{\nu}{\eps}$,
 and estimates~\eqref{eqn: ghost force estimate}
and~\eqref{eqn: scaled coupling error estimate} would reduce to
\begin{equation*}
 \norm{\Delta^2\alpha_\J}_{\lpe(\J)} \leq C \eps^2 \nu^{\frac{1}{p}-2} \quad\text{and}\quad
 \eps \norm{\Delta\beta_\J y''}_{\lpe(\J)} \leq D \eps^2 \nu^{\frac{1}{p}-1}.
\end{equation*}
\end{remark}

\begin{remark}[Newton's Third Law]
We observe that the forces arising on each atom due to the BQC energy
can be decomposed into a sum of central forces which satisfy Newton's Third Law.
In particular, the forces arising from both the BQCE and BQNL energies satisfy Newton's Third Law.
\end{remark}

\section{Stability of the BQC method}\label{subsec: stability}
First we derive expressions for the second variations of the atomistic
and BQC energies.  We have
\begin{align}\label{eqn: formula for Hessian}
\fv^2 \bqcab(y)[u,u] &= \eps \periodsum \p''(y'_{\xi}) |u'_\xi|^2 + 4 \alpha_\xi \p''(2y'_\xi)|u'_\xi|^2
+ \beta_\xi \p''(y'_{\xi} + y'_{\xi+1})|u'_\xi + u'_{\xi+1}|^2 \\
 &= \eps \periodsum \p''(y'_{\xi})|u'_\xi|^2 +4\alpha_\xi \p''(2y'_\xi)|u'_\xi|^2 +\beta_\xi \p''(y'_{\xi}
 + y'_{\xi + 1}) [2 |u'_\xi|^2 + 2 |u'_{\xi + 1}|^2 - \eps^2 |u''_\xi|^2] \notag\\
 &= \eps \periodsum \abar_\xi |u'_\xi|^2 + \eps^2 \bar{B}_\xi |u''_\xi|^2,\notag
\end{align}
where
\begin{equation}\label{eqn: bqc hessian cfcts}
\begin{split}
\abar_\xi &:=  \p''(y'_{\xi}) + 4 \left [ \half \beta_\xi \p''(y'_{\xi} + y'_{\xi+1})
                              + \half \beta_{\xi-1} \p''(y'_{\xi-1} + y'_{\xi})
                              + \alpha_\xi \p''(2y'_\xi) \right ] \mbox{ and} \\
\bar{B}_\xi &:= - \beta_\xi \p''(y'_{\xi} + y'_{\xi+1}).
\end{split}
\end{equation}
The second variation of the atomistic energy is, of course, a special
case of the above. We have
\begin{align*}
\fv^2 \nl(y)[u,u] &= \eps \periodsum \p''(y'_{\xi}) + 4 \left \{\half \p''(y'_{\xi} + y'_{\xi+1}) +
\half \p''(y'_{\xi} + y_{\xi -1})] \right \} |u'_\xi|^2 +(-\eps^2 \p''(y'_{\xi} + y'_{\xi+1}) |u''_\xi|^2) \\
                &= \eps \periodsum A_\xi |u'_\xi|^2 + \eps^2 B_\xi |u''_\xi|^2,
\end{align*}
where
\begin{equation}\label{eqn: nl hessian cfcts}
\begin{split}
A_\xi &:=   \p''(y'_{\xi}) + 4 \left [ \half \p''(y'_{\xi} + y'_{\xi+1})
                              + \half  \p''(y'_{\xi-1} + y'_{\xi}) \right ] \mbox{ and} \\
B_\xi &:= - \p''(y'_{\xi} + y'_{\xi+1}).
\end{split}
\end{equation}

We begin our analysis with a lemma bounding $|A_\xi - \abar_\xi|$.
In Remark~\ref{rem: clarification of modeling}, we interpreted
each term which appears in the estimate below, and in Remarks~\ref{rem: ghost force}
and~\ref{rem: coupling error}
we explained how each term depends on $\eps$, the blending
function, and the number of atoms in the interface. We concluded
that
\begin{equation*}
\eps \norm{\Delta \beta_\xi y''_\xi}_{\ell^\infty} \lesssim \eps \kk^{-1} \norm{y''}_{\ell^{\infty}(\I)}\quad\text{and}\quad
\norm{\Delta^2\alpha_\xi}_{\ell^\infty}  \lesssim \kk^{-2},
\end{equation*}
where $\kk$ is the number of atoms in the interface.
The reader should keep these scalings in mind throughout
Section~\ref{subsec: stability}.

\begin{lemma}\label{lem: difference of hessian cfcts}
\begin{verbatim} \end{verbatim}
\begin{enumerate}
\item Let $\bqce$ be the BQCE energy with blending function $\gamma$.  We have
\begin{equation*}
\begin{split}
\max_\xi |\abar_\xi -A_\xi |\leq &2\Ctwo\norm{\Delta^2\alpha_\xi}_{\ell^\infty}  \\
+&\stabbound
\end{split}
\end{equation*}
where $\alpha_\xi := \overline{\gamma_\xi}$,
$\beta_\xi := 1 - \frac{\gamma_{\xi+1} + \gamma_{\xi-1}}{2},$
and $\bar{C}_i = C_i(r^{min})$,
$i = 2,3,4,$ for $r^{min}:=2\,\min_{\xi \in \mathbb{Z}} y'_\xi.$
\item Let $\bqnl$ be the BQNL energy with blending function $\beta$.  We have
\begin{equation*}
\max_\xi |\abar_\xi -A_\xi | \leq \stabbound
\end{equation*}
where $\bar{C}_i = C_i(r^{min})$,
$i = 3,4,$ for $r^{min}:=2\,\min_{\xi \in \mathbb{Z}} y'_\xi.$
\end{enumerate}
\end{lemma}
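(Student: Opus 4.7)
The plan is to mirror the Taylor expansion strategy used in the proof of Theorem~\ref{thm: modeling}, but applied to $\phi''$ instead of $\phi'$, and working in $\ell^\infty$ rather than the dual norm $\Ump$. Subtracting~\eqref{eqn: bqc hessian cfcts} from~\eqref{eqn: nl hessian cfcts}, the first-nearest-neighbor terms $\phi''(y'_\xi)$ cancel, and the remaining difference is
\[
\bar{A}_\xi - A_\xi = -2(1-\beta_\xi)\p''(y'_\xi + y'_{\xi+1}) - 2(1-\beta_{\xi-1})\p''(y'_{\xi-1} + y'_\xi) + 4\alpha_\xi \p''(2y'_\xi).
\]
So the structure is essentially the same as the expression for the modeling error in the proof of Theorem~\ref{thm: modeling}, only with $\p'$ replaced by $\p''$ and the test-function factor $u'_\xi$ absent.

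The next step is to Taylor expand the two terms $\p''(y'_\xi + y'_{\xi+1})$ and $\p''(y'_{\xi-1} + y'_\xi)$ around the point $2y'_\xi$, using the identities $y'_{\xi+1} = y'_\xi + \eps y''_\xi$ and $y'_{\xi-1} = y'_\xi - \eps y''_{\xi-1}$, to second order with remainder in $\p^{(4)}$. This yields an expansion of $\bar A_\xi - A_\xi$ with (i) a term of order $\eps^0$ proportional to $(1 - \alpha_\xi - \bar\beta_\xi)\p''(2y'_\xi)$, (ii) a term of order $\eps$ proportional to $[(1-\beta_\xi)y''_\xi - (1-\beta_{\xi-1})y''_{\xi-1}]\p'''(2y'_\xi)$, and (iii) an $\eps^2$ remainder bounded in terms of $(1-\beta_\xi)(y''_\xi)^2$ and $(1-\beta_{\xi-1})(y''_{\xi-1})^2$. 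For the zeroth order term, the identity $2(1-\alpha_\xi-\bar\beta_\xi) = -\Delta^2\alpha_\xi$ from~\eqref{eqn: ghost force} converts the prefactor into the ghost-force quantity $\Delta^2\alpha_\xi$. For the first order term, the standard decomposition
\[
(1-\beta_\xi)y''_\xi - (1-\beta_{\xi-1})y''_{\xi-1} = (1-\beta_{\xi-1})\eps y'''_\xi - \Delta\beta_\xi\, y''_\xi
\]
splits the contribution into a piece supported where $\Delta\beta_\xi \neq 0$ (the coupling error) and a piece carrying the factor $(1-\beta_{\xi-1})$ that is active only outside the atomistic region.

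Once these three groups of terms have been identified, applying the triangle inequality and the uniform pointwise bounds $\bar C_i$ on $|\phi^{(i)}|$ valid since $2y'_\xi \geq r^{\min}$ gives, term by term, exactly the four summands appearing in the macro \verb|\stabbound| together with the extra $2\bar C_2\|\Delta^2\alpha_\xi\|_{\ell^\infty}$ needed for the BQCE bound. Taking the maximum over $\xi$ on both sides yields part (1). For part (2), we recall from~\eqref{eqn: alpha and beta for BQNL} that the BQNL energy corresponds to $\alpha_\xi = 1 - \bar\beta_\xi$, so that $1-\alpha_\xi-\bar\beta_\xi \equiv 0$ and the ghost-force term vanishes identically, leaving only the summands in \verb|\stabbound|.

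The main obstacle is mostly bookkeeping: one must track the asymmetry between $(1-\beta_\xi)$ and $(1-\beta_{\xi-1})$ carefully when carrying out the telescoping rewrite of the $\eps$-order term, and verify that the algebraic identity~\eqref{eqn: ghost force} is being applied with the correct sign so that the ghost-force contribution appears with coefficient $-2\Delta^2\alpha_\xi \p''(2y'_\xi)$ rather than its negative. Apart from this, no new ideas beyond those already developed in the modeling estimate are required.
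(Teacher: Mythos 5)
Your proposal follows essentially the same route as the paper's proof: subtract the Hessian coefficients~\eqref{eqn: bqc hessian cfcts} and~\eqref{eqn: nl hessian cfcts}, Taylor expand $\p''$ about $2y'_\xi$, rewrite the first-order term via the telescoping identity in $\Delta\beta_\xi$ and $y'''_\xi$, invoke~\eqref{eqn: ghost force} for the zeroth-order (ghost force) term, and observe that this term vanishes for BQNL since $\alpha_\xi = 1-\overline{\beta_\xi}$. The only blemish is that you quote~\eqref{eqn: ghost force} with the wrong sign ($2\{1-\alpha_\xi-\bar{\beta}_\xi\} = +\Delta^2\alpha_\xi$, not $-\Delta^2\alpha_\xi$), but since the final step takes absolute values this is harmless and your stated conclusion is correct.
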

\begin{proof}
The proof of the lemma is extremely similar to the proof of our modeling estimate above.
For the general BQC energy $\bqcab$ we compute
\begin{equation*}
  \abar_\xi - A_\xi = 2(\beta_\xi -1) \p''(y'_{\xi} + y'_{\xi+1}) + 2(\beta_{\xi-1} -1) \p''(y'_{\xi-1} + y'_{\xi})  + 4\alpha_\xi \p''(2y'_\xi)
\end{equation*}
using formulas~\eqref{eqn: bqc hessian cfcts} and~\eqref{eqn:
nl hessian cfcts}. Then we expand all terms above at $2y'_\xi$
using Taylor's theorem. So,
\begin{equation}\label{eqn: second formula for diff of hess cfcts}
\begin{split}
\abar_\xi - A_\xi &=  4 \{\bbar_\xi + \alpha_\xi - 1\} \p''(2y'_\xi) \\
&+2\eps\p'''(2y'_\xi) \{(\beta_\xi - 1)y''_\xi -(\beta_{\xi-1}-1)y''_{\xi-1}\} \\ &+\eps^2\{(\beta_{\xi}-1)\p^{(4)}(\oc_{2,\xi})(y''_\xi)^2+(\beta_{\xi-1}-1)\p^{(4)}(\oc_{1,\xi})(y''_{\xi-1})^2\}
\end{split}
\end{equation}
where $\oc_{1,\xi} \in \mbox{conv} \{2y'_\xi, y'_{\xi-1} +
y'_\xi\}$, and $\oc_{2,\xi} \in \mbox{conv}\{2y'_\xi,
y'_{\xi+1} + y'_\xi\}$.
Now we expand the second term in curly braces on the right hand side of~\eqref{eqn: second formula for diff of hess cfcts}.
We obtain
\begin{equation} \label{eqn: third formula for diff of hess cfcts}
\begin{split}
\abar_\xi - A_\xi &= 4 \{\bbar_\xi + \alpha_\xi - 1\} \p''(2y'_\xi) \\
&+2\{\eps \Delta\beta_\xi y''_\xi +  \eps^2 (\beta_{\xi-1}-1)y'''_\xi\} \p'''(2y'_\xi) \\ &+\eps^2\{(\beta_{\xi}-1)\p^{(4)}(\oc_{2,\xi})(y''_\xi)^2+(\beta_{\xi-1}-1)\p^{(4)}(\oc_{1,\xi})(y''_{\xi-1})^2\}.
\end{split}
\end{equation}

We now consider the case of the BQCE energy.
As a consequence of equation~\eqref{eqn: ghost force}, we have
\begin{equation}\label{eqn: bqce hessian gf term}
4 \{\bbar_\xi + \alpha_\xi - 1\} = -2\Delta^2\alpha_\xi.
\end{equation}
Thus, by substituting~\eqref{eqn: bqce hessian gf term} into~\eqref{eqn: third formula for diff of hess cfcts},
we see that for the BQCE energy
\begin{equation*}
|\abar_\xi - A_\xi| \leq 2 \Ctwo \norm{\Delta^2\alpha_\xi}_{\ell^\infty} + \stabbound.
\end{equation*}
This proves the first claim made in the statement of the lemma.

On the other hand, we recall from the proof of Theorem~\ref{thm: modeling} that for the BQNL energy,
\begin{equation*}
 \bbar_\xi + \alpha_\xi - 1 = 0 \mbox{ for all } \xi \in \mathbb{Z}.
\end{equation*}
Therefore, for the BQNL energy,
the first term in curly braces in equation~\eqref{eqn: third formula for diff of hess cfcts} vanishes, and we have
\begin{equation*}
|\abar_\xi - A_\xi| \leq \stabbound.
\end{equation*}
This proves the second claim made in the statement of the lemma.

\end{proof}

Now we derive estimates relating the $\U{1}{2}$ coercivity constants of the Hessians of $\bqcab$ and $\nl$. Define
\begin{gather*}
c(y) = \inf_{\norm{u'}_{l^2_\eps} = 1} \fv^2 \nl (y) [u,u], \qquad
c_\beta(y) = \inf_{\norm{u'}_{l^2_\eps} = 1} \fv^2 \bqnl (y) [u,u],  \\
c_\gamma(y) = \inf_{\norm{u'}_{l^2_\eps} = 1} \fv^2 \bqce (y) [u,u].
\end{gather*}

For our {\it a~priori} error estimate, we would like to show
that if $y_a$ is a strongly stable minimizer of $\nl,$ then
$c_{\beta} (y_{a}) \gtrsim c (y_{a})$ and $c_{\gamma} (y_{a}) \gtrsim c (y_{a})$. However, such a
general result was not proved for the QNL method in
\cite{ortner:qnl1d}.  Instead, a weaker stability result that
is restricted to ``elastic states'' without
defects~\cite{Ortner:2008a} was proved. We now extend this {\it
a~priori} stability result to the BQC method.

\begin{remark}[A bound on second neighbor interactions] \label{rem: concave second nbrs}
We will place an additional condition on the set of admissible deformations
in order to prove our {\it a priori} and {\it a posteriori} stability
estimates in Theorems~\ref{thm: aprstab} and~\ref{thm: a posteriori stability}.
We will assume that
\begin{equation}\label{eqn: lower bound on y'}
\min_\xi y'_\xi \geq \frac{r^*}{2}.
\end{equation}
Under this assumption,
the constants $B_\xi$ and $\bar{B}_\xi$ in the expressions for the atomistic and BQC Hessians are nonnegative.
Assumption~\eqref{eqn: lower bound on y'} is justified since $y'_\xi
\leq \frac{r^*}{2}$ only under extreme compression, and in that case
the second nearest neighbor pair interaction model~\eqref{def: nl
  energy} itself can be expected to be invalid. The authors
of~\cite{ortner:qnl1d},~\cite{Dobson:2008b}, and~\cite{Dobson:2008c}
all consider energies similar to~\eqref{def: nl energy}, and they all
make assumptions similar to~\eqref{eqn: lower bound on y'}
(see Section~2.3 of~\cite{ortner:qnl1d} for further
discussion of this point).
\end{remark}

\begin{theorem}[A priori stability]\label{thm: aprstab}
Let $\underline{A} = \min_\xi A_\xi
(y)$. Assume that $\min_\xi y'_\xi \geq \frac{r^*}{2}>0$.

\begin{enumerate}
\item
Let $\bqce$ be the BQCE energy with blending function $\gamma$.
We have
\begin{equation*}
\begin{split}
c_\gamma(y) \geq \underline{A} - &2\Ctwo\norm{\Delta^2\alpha_\xi}_{\ell^\infty}\\
 -&\minusstabbound,
\end{split}
\end{equation*}
where $\alpha_\xi := \overline{\gamma_\xi}$,
$\beta_\xi := 1 - \frac{\gamma_{\xi+1} + \gamma_{\xi-1}}{2}$,
and $\bar{C}_i = C_i(r^{min})$,
$i = 2,3,4,$ for $r^{min}:=2\,\min_{\xi \in \mathbb{Z}} y'_\xi.$
\item
Let $\bqnl$ be the BQNL energy with blending function $\beta$.
We have
\begin{equation*}
c_\beta(y) \geq \underline{A} - \minusstabbound,
\end{equation*}
where $\bar{C}_i = C_i(r^{min})$,
$i = 3,4,$ for $r^{min}:=2\,\min_{\xi \in \mathbb{Z}} y'_\xi.$
\end{enumerate}
\end{theorem}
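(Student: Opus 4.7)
The plan is to reduce the stability estimate to a pointwise comparison of the Hessian coefficients $\bar A_\xi$ and $A_\xi$, to which Lemma~\ref{lem: difference of hessian cfcts} directly applies. The starting point is the decomposition
\begin{equation*}
\fv^2 \bqcab(y)[u,u] = \eps \periodsum \abar_\xi |u'_\xi|^2 + \eps^2 \bar{B}_\xi |u''_\xi|^2
\end{equation*}
from~\eqref{eqn: formula for Hessian}. The first maneuver is to discard the $\bar B_\xi$ term. Under the hypothesis $\min_\xi y'_\xi \geq r^*/2$, each bond length $y'_\xi + y'_{\xi+1} \geq r^*$, so by assumption~(2) on $\phi$ we have $\phi''(y'_\xi + y'_{\xi+1}) < 0$, and since $\beta_\xi \in [0,1]$, the coefficient $\bar B_\xi = -\beta_\xi\phi''(y'_\xi + y'_{\xi+1})$ is nonnegative (as discussed in Remark~\ref{rem: concave second nbrs}). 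Hence we may drop the $\bar B_\xi$ term to obtain the lower bound
\begin{equation*}
\fv^2 \bqcab(y)[u,u] \geq \eps \periodsum \abar_\xi |u'_\xi|^2.
\end{equation*}

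The second step is the pointwise bound $\abar_\xi \geq A_\xi - |\abar_\xi - A_\xi| \geq \underline A - \max_\zeta |\abar_\zeta - A_\zeta|$. Inserting this into the previous inequality and using $\norm{u'}_{\lte}^2 = 1$ gives
\begin{equation*}
\fv^2 \bqcab(y)[u,u] \geq \underline A - \max_\xi |\abar_\xi - A_\xi|.
\end{equation*}
Taking the infimum over $u$ with $\norm{u'}_{\lte}=1$ yields
\begin{equation*}
c_\gamma(y), \, c_\beta(y) \;\geq\; \underline A - \max_\xi |\abar_\xi - A_\xi|.
\end{equation*}

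The third step is simply to substitute the bounds on $\max_\xi |\abar_\xi - A_\xi|$ provided by Lemma~\ref{lem: difference of hessian cfcts}. For BQCE with blending $\gamma$ (so that $\alpha_\xi = \overline{\gamma_\xi}$ and $\beta_\xi = 1 - \tfrac{1}{2}(\gamma_{\xi+1}+\gamma_{\xi-1})$), part~(1) of the lemma supplies the extra ghost-force term $2\Ctwo\norm{\Delta^2\alpha_\xi}_{\ell^\infty}$ together with the coupling and Cauchy-Born contributions in $\minusstabbound$; this gives statement~(1) of the theorem. For BQNL with blending $\beta$, part~(2) of the lemma provides the same bound but without the ghost-force term, reflecting the fact shown in the proof of Lemma~\ref{lem: difference of hessian cfcts} that $\bbar_\xi + \alpha_\xi - 1 = 0$ for BQNL, so that the zeroth-order term in the Taylor expansion of $\abar_\xi - A_\xi$ vanishes. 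This yields statement~(2) of the theorem.

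The only subtle point in the argument is the nonnegativity of $\bar B_\xi$, which relies on both the compression bound $\min_\xi y'_\xi \geq r^*/2$ and the sign convention on $\phi''$. Once that is in place, the proof is essentially an application of the preceding lemma and a pointwise lower bound on $\abar_\xi$. I expect no significant obstacle, since the deeper analytical work has already been done in the Taylor expansion of $\abar_\xi - A_\xi$ that underlies Lemma~\ref{lem: difference of hessian cfcts}.
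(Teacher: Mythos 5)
Your proposal is correct and follows essentially the same route as the paper's own proof: drop the nonnegative $\bar{B}_\xi$ term using the compression bound, pass to the pointwise estimate $\abar_\xi \geq \underline{A} - \max_\xi|\abar_\xi - A_\xi|$, and invoke Lemma~\ref{lem: difference of hessian cfcts}. No gaps.
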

\begin{proof}
Let $u \in \U{1}{2}$ with $\norm{u}_{\U{1}{2}} = 1$. Recall
that by formula~\eqref{eqn: formula for Hessian} we have
\begin{equation*}
\fv^2 \bqcab (y)[u,u] = \eps \periodsum \abar_\xi |u'_\xi|^2 + \eps^2 \bar{B}_\xi |u''_\xi|^2
\end{equation*}
for any BQC energy $\bqcab$.
Since we assume $\min_\xi y'_\xi \geq \frac{r^*}{2}$,
all the coefficients $\bar{B}_\xi$ are nonnegative,
so
\begin{equation*}
\fv^2 \bqcab (y)[u,u] \geq \eps \periodsum \abar_\xi |u'_\xi|^2.
\end{equation*}

Now we assume that the energy $\bqcab$ is a BQCE energy.  Then
using the first part of Lemma~\ref{lem: difference of hessian cfcts}
we have
\begin{align}
\fv^2 \bqce (y)[u,u] &\geq \eps \periodsum A_\xi |u'_\xi|^2 - 2 \Ctwo \norm{\Delta^2\alpha_\xi}_{\ell^\infty} \notag\\
                    & \quad \quad -\minusstabbound \notag \\
                  &\geq \underline{A} -2  \Ctwo \norm{\Delta^2\alpha_\xi}_{\ell^\infty} \notag\\
                  &\quad \quad- \minusstabbound.\notag
\end{align}
This proves the first claim made in the statement of the theorem.

For the BQNL method, we make a similar estimate using the second part of
Lemma~\ref{lem: difference of hessian cfcts}. We have
\begin{align}
\fv^2 \bqnl (y)[u,u] &\geq \eps \periodsum A_\xi |u'_\xi|^2 - \minusstabbound \notag\\
                  &\geq \underline{A} - \minusstabbound.
\end{align}
This proves the second claim made in the statement of the theorem.
\end{proof}

\begin{remark}[Accuracy of critical strain]\label{rem: accuracy of critical strain}
Let $y^F$ be the uniform deformation $y^F_\xi := F\eps \xi$. Let $F^*$ be the
strain at which the lattice loses stability in the atomistic model, so
\begin{equation*}
 F^* := \inf \{F \in (0,\infty) : c(y^F) \leq 0\}.
\end{equation*}
We call $F^*$ the \emph{critical strain} of the atomistic model, and
we define the critical strains of the Cauchy-Born,
BQCE, and BQNL energies similarly. The significance of the critical
strain is discussed in~\cite{doblusort:qce.stab}.

Under uniform strain,
the $\U{-1}{2}$ coercivity constant $c_{cb}(y^F)$ of the Cauchy-Born
energy is
\begin{equation*}
c_{cb}(y^F) = \p''(F)+4\p''(2F) =: A_F.
\end{equation*}
Furthermore, by formula~\eqref{eqn: nl hessian cfcts}, for the atomistic model
under uniform strain $\underline{A} := \min_\xi A_\xi = A_F$.
Thus, by Theorem~\ref{thm: aprstab}, we see
\begin{equation}\label{eqn: critical stability}
\begin{split}
 c_\gamma(y^F) &\geq \underline{A} - 2\Ctwo\norm{\Delta^2\alpha_\xi}_{\ell^\infty}  = c_{cb}(y^F) -  2\Ctwo\norm{\Delta^2\alpha_\xi}_{\ell^\infty} \geq c_{cb}(y^F) - 2 \bar{C}_2 C_\gamma \kk^{-2}, \mbox{\quad and}\\
 c_\beta(y^F) &\geq \underline{A} = c_{cb}(y^F),
 \end{split}
\end{equation}
where $C_\gamma$ is the constant which arises in estimate~\eqref{eqn: ghost force estimate}, and $\kk$ is
the number of atoms in the interface. In the language of~\cite{doblusort:qce.stab},
estimates~\eqref{eqn: critical stability} imply that there is no error in the critical strain of the BQNL energy,
and that the error in the critical strain of the BQCE energy decreases as $\kk^{-2}$.
\end{remark}

For an {\it a posteriori} existence result,
one would like to show that if $y_{bqc}$ is a strongly stable
minimizer of $\bqcab$, then $c(y_{bqc}) \gtrsim c_\beta (y_{bqc})$.
We will not present an {\it a posteriori} existence result,
but we give an {\it a posteriori} stability estimate anyway.
Using this estimate, one can prove {\it a posteriori} existence theorems similar to Theorems~\ref{thm: a priori existence for BQNL}
and~\ref{thm: a priori existence for BQCE}.
\begin{theorem}\label{thm: a posteriori stability}
\emph{(A posteriori stability)} Assume that $\min_\xi y'_\xi
\geq \frac{r^*}{2}>0$.
\begin{enumerate}
\item
Let $\bqce$ be the BQCE energy with blending function $\gamma$.
We have
\begin{equation*}
\begin{split}
c(y) \geq c_\gamma(y) -&2\Ctwo\norm{\Delta^2\alpha_\xi}_{\ell^\infty}\\
-&\minusstabbound.
\end{split}
\end{equation*}
where $\alpha_\xi := \overline{\gamma_\xi}$,
$\beta_\xi := 1 - \frac{\gamma_{\xi+1} + \gamma_{\xi-1}}{2}$,
and $\bar{C}_i = C_i(r^{min})$,
$i = 2,3,4,$ for $r^{min}:=2\,\min_{\xi \in \mathbb{Z}} y'_\xi.$
\item
Let $\bqnl$ be the BQNL energy with blending function $\beta$.
We have
\begin{equation*}
c(y) \geq c_\beta(y) - \minusstabbound,
\end{equation*}
where $\bar{C}_i = C_i(r^{min})$,
$i = 3,4,$ for $r^{min}:=2\,\min_{\xi \in \mathbb{Z}} y'_\xi.$
\end{enumerate}
\end{theorem}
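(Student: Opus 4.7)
The plan is to mirror the a priori stability argument in Theorem~\ref{thm: aprstab}, but reverse the direction of comparison: instead of dominating $\fv^2\bqcab(y)[u,u]$ by $\eps\periodsum A_\xi|u'_\xi|^2$, I will directly relate $\fv^2\nl(y)[u,u]$ to $\fv^2\bqcab(y)[u,u]$. Subtracting the Hessian formulas~\eqref{eqn: formula for Hessian} from their counterpart for $\nl$ gives the identity
\begin{equation*}
\fv^2\nl(y)[u,u] - \fv^2\bqcab(y)[u,u]
= \eps\periodsum (A_\xi - \abar_\xi)|u'_\xi|^2 + \eps^2(B_\xi - \bar{B}_\xi)|u''_\xi|^2,
\end{equation*}
in which the coefficient of the second-order difference is
$B_\xi - \bar{B}_\xi = -(1-\beta_\xi)\p''(y'_\xi + y'_{\xi+1})$.

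The structural observation on which the proof hinges is that this coefficient is nonnegative under the hypothesis $\min_\xi y'_\xi \geq r^*/2$. Indeed, $\beta_\xi \in [0,1]$ in both the BQCE case (from~\eqref{eqn: alpha and beta for BQCE}) and the BQNL case (from~\eqref{eqn: alpha and beta for BQNL}), so $1-\beta_\xi \geq 0$; meanwhile $y'_\xi + y'_{\xi+1} \geq r^*$, so assumption~(2) on $\p$ forces $\p''(y'_\xi + y'_{\xi+1}) < 0$. Consequently $\eps^2(B_\xi - \bar{B}_\xi)|u''_\xi|^2 \geq 0$ termwise and can be dropped to obtain
\begin{equation*}
\fv^2\nl(y)[u,u] \geq \fv^2\bqcab(y)[u,u] + \eps\periodsum (A_\xi - \abar_\xi)|u'_\xi|^2
\geq \fv^2\bqcab(y)[u,u] - \max_\xi|A_\xi - \abar_\xi|\,\norm{u'}_{\lte}^2.
\end{equation*}

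Taking the infimum over $u \in \U{1}{2}$ with $\norm{u'}_{\lte} = 1$ yields $c(y) \geq c_\gamma(y) - \max_\xi|A_\xi - \abar_\xi|$ in the BQCE case and $c(y) \geq c_\beta(y) - \max_\xi|A_\xi - \abar_\xi|$ in the BQNL case. Substituting the pointwise estimates on $|A_\xi - \abar_\xi|$ from parts~(1) and~(2) of Lemma~\ref{lem: difference of hessian cfcts} then gives precisely the two bounds claimed in the theorem. The main obstacle, such as it is, lies in isolating the sign of $B_\xi - \bar{B}_\xi$; once that is recognised as an immediate consequence of the assumption $\min_\xi y'_\xi \geq r^*/2$ and the concavity of $\p$ beyond $r^*$, the rest of the argument is a mechanical reduction to Lemma~\ref{lem: difference of hessian cfcts}, exactly parallel to the proof of Theorem~\ref{thm: aprstab}.
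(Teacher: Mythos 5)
Your proof is correct and follows essentially the same route as the paper's (which is only sketched there): write the difference of the two Hessians, drop the nonnegative second-difference term, and invoke Lemma~\ref{lem: difference of hessian cfcts}. The only refinement is that you isolate the precise sign condition needed, namely $B_\xi - \bar{B}_\xi = -(1-\beta_\xi)\phi''(y'_\xi + y'_{\xi+1}) \geq 0$, whereas the paper's sketch cites only $B_\xi \geq 0$; both follow from the hypothesis $\min_\xi y'_\xi \geq r^*/2$, so this is a clarification rather than a different argument.
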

\begin{proof}
The proof of the theorem is extremely similar to the proof of
Theorem~\ref{thm: aprstab}. We use Lemma~\ref{lem: difference
of hessian cfcts} and that $B_\xi \geq 0$ whenever $\min_\xi
y'_\xi \geq \frac{r^*}{2}$.
\end{proof}

\section{A~priori error estimates for the BQCE and BQNL methods}\label{subsec: a priori existence}
We are now ready to prove our {\it a~priori} error estimates.
Our results generalize Theorem~8 and its proof given in~\cite{ortner:qnl1d}.
Before stating the theorem, it is convenient to establish some notation.
Let $y$ be a minimizer of the energy $\Phi^{total}$.
We will use Theorem~\ref{thm: aprstab} in the proof of Theorem~\ref{thm: a priori existence for BQNL}.
Thus, we must assume that $y$ is an elastic state.
That is, we assume that $\underline{A} := \min_\xi A_\xi > 0$ for $\fv^2 \Phi(y)$.
See the discussion preceding Theorem~\ref{thm: aprstab} for more explanation.
Now define the energy $\gtotal: \Y \rightarrow \Real$ by
$\gtotal(y) := \bqce(y) - <f,y>$.

We realize that the statement of Theorem~\ref{thm: a priori existence for BQCE} may seem slightly complicated,
but the basic idea of the theorem is quite simple.
In essence, Theorem~\ref{thm: a priori existence for BQCE} states that if $y$ is a stable minimizer
of the atomistic energy which is sufficiently smooth in the continuum region,
and if the ghost force error is sufficiently small,
then there exists a stable minimizer $y_{\gamma}$ of the BQCE energy which is close to $y$.
The conditions~\eqref{eqn: apr thm delta one}~and~\eqref{eqn: apr thm delta two}
make the hypotheses that $y$ must be sufficiently smooth in the continuum region
and that the ghost force must be small precise.

\begin{theorem}[A priori error estimate for the BQCE method]\label{thm: a priori existence for BQCE}
Let $\gtotal$, $y$, and $\underline{A}$ be as above.
Assume that $\underline{A} > 0$, and that $\min_\xi y'_\xi \geq \frac{r^*}{2}>0$.
  There exist
constants $\delta_1 := \delta_1(\underline{A})$ and $\delta_2
:= \delta_2(\min_\xi y'_\xi, \underline{A},\Cone, \Ctwo, \Cthr,
\gamma)$ so that if
 \begin{equation}\label{eqn: apr thm delta one}
 2\Ctwo\norm{\Delta^2\alpha_\xi}_{\ell^\infty} + 2\eps\Cthr\norm{\Delta\beta_\xi y''_\xi}_{\ell^\infty} +2 \eps^2 \{ \Cthr\norm{(1-\beta_{\xi-1})y'''_\xi}_{\ell^\infty} +\Cfour\norm{(1-\beta_{\xi})(y''_\xi)^2}_{\ell^\infty} \} \leq \delta_1,
 \end{equation}
and
 \begin{equation}\label{eqn: apr thm delta two}
  \eps^{-\half} \Cone\norm{\Delta^2\alpha_\xi}_\lte + \ltwoconsbound{\eps^{\half}}{\eps^{\frac{3}{2}}} \leq \delta_2,
 \end{equation}
then there exists a locally unique minimizer $y_\gamma$ of $\gtotal$ which satisfies
\begin{equation*}
 \norm{y - y_\gamma}_{\U{1}{2}} \leq  \frac{4}{\underline{A}} \left [\Cone\norm{\Delta^2\alpha_\xi}_\lte + \ltwoconsbound{\eps}{\eps^2}\right].
\end{equation*}
\end{theorem}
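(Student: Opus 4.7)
The plan is to apply the Inverse Function Theorem (Theorem~\ref{thm: ift}) to the residual map $\F : \mathcal{U} \to \U{-1}{2}$ defined by
\begin{equation*}
  \F(u) := \fv \gtotal(y + u) = \fv \bqce(y+u) - \fv \nl(y),
\end{equation*}
with base point $x_0 := 0 \in \U{1}{2}$. Here we have used that $\fv \Phi^{total}(y) = 0$ rewrites as $\langle f, \cdot \rangle = \fv \nl(y)$, so $\F(0) = \fv \bqce(y) - \fv \nl(y)$ is precisely the BQCE modeling residual analyzed in Theorem~\ref{thm: modeling}. Any zero $u$ of $\F$ then produces a critical point $y_\gamma := y + u$ of $\gtotal$, and the IFT bound $\|u\|_{\U{1}{2}} < 2\eta\sigma$ will give both existence and the error estimate stated in the theorem.

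\textbf{The three IFT ingredients.} First, the $p=2$ case of Theorem~\ref{thm: modeling}(1) immediately yields
\begin{equation*}
  \umpnorm{\F(0)} \leq \Cone\norm{\Delta^2\alpha_\xi}_{\lte} + \ltwoconsbound{\eps}{\eps^2} =: \eta.
\end{equation*}
Second, since $\fv \F(0) = \fv^2 \bqce(y) \in L(\U{1}{2}, \U{-1}{2})$, we use the BQCE case of Theorem~\ref{thm: aprstab}: condition~\eqref{eqn: apr thm delta one} bounds the error terms on the right-hand side of that theorem, so choosing $\delta_1 \leq \underline{A}/2$ forces $c_\gamma(y) \geq \underline{A}/2$, giving $\sigma := 2/\underline{A}$ as an upper bound on $\|(\fv\F(0))^{-1}\|_{L(\U{-1}{2},\U{1}{2})}$. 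Third, using that $\phi \in C^4$ together with the formula~\eqref{eqn: formula for Hessian} for $\fv^2 \bqcab$, one obtains a Lipschitz bound
\begin{equation*}
  \norm{\fv \F(v_1) - \fv \F(v_2)}_{L(\U{1}{2},\U{-1}{2})} \leq L \norm{v_1 - v_2}_{\U{1}{2}}
\end{equation*}
for $v_1, v_2$ in a $\U{1}{2}$-ball around $0$, with $L$ depending on $\Cthr$, $\min_\xi y'_\xi$, and an upper bound on the blending coefficients. The nontrivial point here is that pointwise Lipschitz control of $\phi''$ evaluated at nearby strains requires an $\ell^\infty$ bound on the perturbation, and this is supplied by the inverse estimate $\norm{v'}_{\ell^\infty} \leq \eps^{-1/2} \norm{v'}_{\lte}$ available for $N$-periodic discrete functions.

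\textbf{Closing the argument and the main obstacle.} The IFT condition $2L\sigma^2 \eta < 1$ together with the radius constraint $2\eta\sigma \leq \eps^{1/2} \cdot C(\min_\xi y'_\xi)$ needed to keep $y+v$ inside the region where $\phi$ (and the Lipschitz estimate) is valid, both reduce to an inequality of the form $\eps^{-1/2} \eta \leq \delta_2$ with $\delta_2 = \delta_2(\min_\xi y'_\xi, \underline{A}, \Cone, \Ctwo, \Cthr, \gamma)$; this is exactly hypothesis~\eqref{eqn: apr thm delta two}. Theorem~\ref{thm: ift} then produces a unique $u \in B_{\U{1}{2}}(0, 2\eta\sigma)$ with $\F(u) = 0$, so that $y_\gamma := y + u$ is a critical point of $\gtotal$ satisfying $\norm{y - y_\gamma}_{\U{1}{2}} < 2\eta\sigma = (4/\underline{A}) \eta$, which is the claimed estimate. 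Local minimality of $y_\gamma$ follows because $\fv^2 \bqce(y_\gamma)$ is coercive by continuity: the Lipschitz estimate of the Hessian and the smallness of $\|y - y_\gamma\|_{\U{1}{2}}$ imply $c_\gamma(y_\gamma) \geq c_\gamma(y) - L \cdot 2\eta\sigma \geq \underline{A}/4$, say, after possibly further restricting $\delta_2$.

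\textbf{Anticipated difficulty.} The main obstacle is the mismatch between the norm $\U{1}{2}$ in which the IFT is set up and the norm $\U{1}{\infty}$ in which the Hessian of $\bqce$ is naturally Lipschitz. Bridging this gap via the inverse estimate costs a factor $\eps^{-1/2}$, which is responsible for the weighting of the consistency terms in~\eqref{eqn: apr thm delta two}. A secondary, purely bookkeeping difficulty is that all constants must remain uniform in $\eps$ and in the shape of the blending function $\gamma$; this forces one to absorb all dependence on $\gamma$ into $\delta_2$ rather than into the final error bound, which accounts for the slightly intricate statement of the theorem.
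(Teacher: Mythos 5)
Your proposal is correct and follows essentially the same route as the paper: the Inverse Function Theorem applied to $\mathcal{F}(w):=\fv\gtotal(y+w)$ at $x_0=0$, with $\eta$ from the $p=2$ modeling estimate of Theorem~\ref{thm: modeling}, $\sigma=2/\underline{A}$ from Theorem~\ref{thm: aprstab} under hypothesis~\eqref{eqn: apr thm delta one}, the Lipschitz constant $L=\eps^{-1/2}L'$ obtained via the inverse inequality $\norm{w'}_{\ell^\infty}\leq\eps^{-1/2}\norm{w'}_{\lte}$, and both the fixed-point condition $2L\sigma^2\eta<1$ and the lower bound on $y'+w'$ absorbed into~\eqref{eqn: apr thm delta two}. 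You also correctly identify the $\eps^{-1/2}$ norm mismatch as the source of the scaling in~\eqref{eqn: apr thm delta two}, which is exactly how the paper's proof closes the argument.
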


\begin{proof}
Let $\mathcal{F} : \U{1}{2} \rightarrow \U{-1}{2}$ by $\mathcal{F} (w) := \fv \gtotal (y + w)$.
We will apply Theorem~\ref{thm: ift} to $\mathcal{F}$ with $x_0 := 0$.
In order to apply Theorem~\ref{thm: ift}, we need to find a bound $\eta$ on the residual $\mathcal{F}(0)$,
a bound $\sigma$ on $\norm{(\fv \mathcal{F}(0))^{-1}}_{L(\U{-1}{2}, \U{1}{2})}$,
and a Lipschitz constant $L$ for $\fv \mathcal{F}$ on the ball $B_{\U{1}{2}}(0, 2\eta\sigma)$.
We will find $\eta$ using the modeling estimates given in Theorem~\ref{thm: modeling},
and we will find $\sigma$ using the stability estimates given in Theorem~\ref{thm: aprstab}
together with the condition~\eqref{eqn: apr thm delta one}.
Once we have these constants, we must verify the condition $2L\sigma^2\eta < 1$.
As we will see, this condition holds if $\delta_1$ and $\delta_2$ are taken sufficiently small.
Once these facts are established, Theorem~\ref{thm: ift} implies that there exists $y_\gamma \in \Y$
such that $\mathcal{F}(y_\gamma) = 0$ and $\norm{y - y_\gamma}_{\U{1}{2}} \leq 2 \eta \sigma$.
Finally, we show that $y_\gamma$ is a strongly stable minimizer of $\gtotal$ which proves Theorem~\ref{thm: a priori existence for BQNL}.

\textbf{1. Modeling error: $\norm{\mathcal{F}(0)}_{\U{-1}{2}}
\leq \eta$.} First, we find $\eta$ so that
$\norm{\mathcal{F}(0)}_{\U{-1}{2}} \leq \eta$. By
Theorem~\ref{thm: modeling}, we have
\begin{equation*}
\begin{split}
  \norm{\mathcal{F}(0)}_{\U{-1}{2}} &= \norm{\fv \bqce (y)}_{\U{-1}{2}} \\
  &\leq \Cone\norm{\Delta^2\alpha_\xi}_\lte + \ltwoconsbound{\eps}{\eps^2},
\end{split}
\end{equation*}
so we take in Theorem~\ref{thm: ift}
\begin{equation*}
\eta := \Cone\norm{\Delta^2\alpha_\xi}_\lte + \ltwoconsbound{\eps}{\eps^2}.
\end{equation*}

\textbf{2. Stability: $\norm{(\fv \mathcal{F}(0))^{-1}}_{L(\U{-1}{2},\U{1}{2})} \leq \sigma$.}
Second, we find $\sigma$ so that
\begin{equation*}
\norm{(\fv \mathcal{F}(0))^{-1}}_{L(\U{-1}{2}, \U{1}{2})} \leq \sigma.
\end{equation*}
By Theorem~\ref{thm: aprstab}, we have
\begin{equation}\label{eqn: apr thm stability}
 c_\beta(y) \geq \underline{A} -  2\Ctwo\norm{\Delta^2\alpha_\xi}_{\ell^\infty} - \minusstabbound.
\end{equation}
Then if we take $\delta_1 \leq  \frac{\underline{A}}{2}$,
we can combine~\eqref{eqn: apr thm stability}~and~\eqref{eqn: apr thm delta one} to find
\begin{equation*}
\begin{split}
c_\beta (y) &\geq \underline{A} -  2\Ctwo\norm{\Delta^2\alpha_\xi}_{\ell^\infty} - \minusstabbound\\
            &\geq \underline{A} - \delta_1 \\
            &\geq \frac{\underline{A}}{2}.
\end{split}
\end{equation*}
In that case,
\begin{equation*}
\norm{(\fv \mathcal{F}(0))^{-1}}_{L(\U{-1}{2}, \U{1}{2})} = \norm{(\fv^2 \bqce(y))^{-1}}_{L(\U{-1}{2}, \U{1}{2})} \leq \frac{1}{c_\beta(y)} \leq \frac{2}{\underline{A}}.
\end{equation*}
So we can use
\begin{equation}\label{eqn: apr thm sigma}
\sigma := \frac{2}{\underline{A}}.
\end{equation}

\textbf{3. Lipschitz bound of $\fv \mathcal{F}$ on
$B_{\U{1}{2}}(0, 2 \eta \sigma)$.} Now we bound the Lipschitz
constant of $\fv \mathcal{F}$ on $B_{\U{1}{2}}(0, 2 \eta
\sigma)$. The Lipschitz bound follows easily if we can assume
that
\begin{equation}\label{eqn: bound on y' + w'}
 y'_\xi + w'_\xi \geq \half y'_\xi
\end{equation}
for all $w$ with $\norm{w}_{\U{1}{2}} \leq 2 \eta \sigma$. We will choose $\delta_2$ so that~\eqref{eqn: apr thm delta two} implies~\eqref{eqn: bound on y' + w'}. To that end, observe that for $\norm{w}_{\U{1}{2}} \leq 2 \eta \sigma$ we have
\begin{equation*}
 \norm{w'}_{\ell^\infty} \leq \eps^{-\half} \norm{w'}_{\lte} \leq 2 \eps^{-\half} \eta \sigma = M'_1 \eps^{-\half} \eta
\end{equation*}
where $M'_1 := 2 \sigma$. Then we need only choose $\delta_2 \leq \frac{1}{2 M'_1} \min_\xi y'_\xi$ to ensure that~\eqref{eqn: bound on y' + w'} holds.

To see the Lipschitz bound, let $w_1, w_2 \in B_{\U{1}{2}}(0, 2 \eta \sigma)$.
Then expand
\begin{equation*}
|\fv^2\bqce(y+w_1)[u,v] - \fv^2\bqce(y+w_2)[u,v]|
\end{equation*}
using formula~\eqref{eqn: formula for Hessian}.
One sees easily that
\begin{equation*}
 |\fv^2\bqce(y+w_1)[u,v] - \fv^2\bqce(y+w_2)[u,v]| \leq L' \norm{w'_1 - w'_2}_{\ell^\infty} \norm{u'}_{\lte} \norm{v'}_{\lte}
\end{equation*}
where $L' := L'(C_3(\half \min_\xi y'_\xi))$. In that case, we have
 \begin{align*}
 \norm{\fv \F(w_1) - \fv \F (w_2)}_{L(\U{1}{2},\U{-1}{2})} &= \norm{\fv^2 \bqce (y + w_1) - \fv^2 \bqce (y + w_2)}_{L(\U{1}{2},\U{-1}{2})} \\
							                           &\leq \eps^{-\half} L' \norm{w_1 -w_2}_{\U{1}{2}}
							= L \norm{w_1 - w_2}_{\U{1}{2}}
\end{align*}
where $L:=\eps^{-\half} L'.$

\textbf{4. The fixed point condition: $2 L \sigma^2 \eta < 1$.}
We show that if $\delta_2$ is sufficiently small then $2 L
\sigma^2 \eta < 1$. We have
\begin{equation}
 2 L \sigma^2 \eta <  1 \Leftrightarrow 2 \eps^{-\half} L' \sigma^2 \eta < 1
                        \Leftrightarrow \eps^{-\half} \eta < \frac{\underline{A}^2}{8 L'}\label{eqn: second condition for delta 2}.
\end{equation}
Observe that~\eqref{eqn: apr thm delta two} could be restated as $\eps^{-\half} \eta \leq \delta_2$ with $\eta$ as defined in part~(1) above.
Thus, inequality~\eqref{eqn: second condition for delta 2} holds whenever $\delta_2 < \frac{\underline{A}^2}{8 L'}$.

\textbf{5. The error estimate.}
Take $\delta_1 := \frac{\underline{A}}{2}$,
and $\delta_2 < \min \{\frac{\underline{A}^2}{8 L'}, \frac{\min_\xi y'_\xi}{2 M'_1}\}$.
Then by Theorem~\ref{thm: ift} and the conclusions of parts (1-4) above,
there exists $y_\gamma \in \U{1}{2}$ with $\fv \gtotal(y_\gamma) = 0$ and
\begin{equation*}
\norm{y_\gamma - y}_{\U{1}{2}} \leq 2 \eta \sigma.
\end{equation*}

It remains only to show that the equilibrium $y_\gamma$ is a locally unique minimizer of $\gtotal$.
We will show that $\fv^2 \bqce(y_\beta)$ is positive definite.
We have
\begin{equation*}
c_\gamma(y_\gamma) \geq  c_\gamma(y) - |c_\gamma(y) - c_\gamma(y_\gamma)|
                 \geq  \frac{1}{\sigma} - L(2 \eta \sigma)
                 > \frac{1}{\sigma} - \frac{1}{\sigma} = 0.
\end{equation*}
The second inequality follows from~\eqref{eqn: apr thm sigma}, the Lipschitz bound, and the error estimate
\begin{equation*}
\norm{y_\gamma - y}_{\U{1}{2}} \leq 2 \eta \sigma.
\end{equation*}
The third inequality follows from the condition $2 L \sigma^2 \eta < 1$.
\end{proof}

We will now restate Theorem~\ref{thm: a priori existence for BQCE}
using the estimates given in Remarks~\ref{rem: ghost force} and~\ref{rem: coupling error}.
For simplicity, assume that all transitions between the atomistic and continuum models occur over regions which contain $\kk$ atoms.
Let $\alpha$ and $\beta$ be the blending functions so that $\Phi^{qce}_\gamma = \Phi_{\alpha,\beta}$.
(See~\eqref{eqn: alpha and beta for BQCE} for the precise definitions of $\alpha$ and $\beta$ given $\gamma$.)
Using the estimates given in Remarks~\ref{rem: ghost force} and~\ref{rem: coupling error},
let $C_1^\gamma$ be a constant so that
\begin{equation*}
\eps \norm{\Delta\beta_\xi}_\lte \leq C_1^\gamma \eps^{\frac{3}{2}}\kk^{-\half}
 \mbox{ and } \eps \norm{\Delta\beta_\xi}_{\ell^\infty} \leq C_1^\gamma \eps \kk^{-1},
\end{equation*}
and let $C_2^\gamma$ be a constant so that
\begin{equation*}
\norm{\Delta^2 \alpha_\xi}_\lte \leq C_2^\gamma \eps^{\half}\kk^{-\frac{3}{2}}
 \mbox{ and } \norm{\Delta^2 \alpha_\xi}_{\ell^\infty} \leq C_2^\gamma \kk^{-2}.
\end{equation*}
In essence, Corollary~\ref{cor: error estimate for BQCE} states that if $y$ is a stable minimizer of the atomistic energy
which is sufficiently smooth in the continuum region
and if the ghost force error is small,
then there exists a minimizer $y_\gamma$ of the BQCE energy $\gtotal$ which is close to $y$.
\begin{corollary}\label{cor: error estimate for BQCE}
Let $\gtotal$, $y$, and $\underline{A}$ be as in Theorem~\ref{thm: a priori existence for BQCE},
and let $C_1^\gamma$, $C_2^\gamma$, and $\kk$ be as above.
There exist
constants $\delta_1$ and $\delta_2$ so that if
 \begin{equation*}
 2 \kk^{-2} C_2^\gamma \Ctwo  + 2 \eps \kk^{-1} C_1^\gamma \Cthr \norm{y''_\xi}_{\ell^\infty(\I)} + 2 \eps^2 \{ \Cthr\norm{(1-\beta_{\xi-1})y'''_\xi}_{\ell^\infty} +\Cfour\norm{(1-\beta_{\xi})(y''_\xi)^2}_{\ell^\infty} \} \leq \delta_1,
 \end{equation*}
and
 \begin{equation*}
\kk^{-\frac{3}{2}} C_2^\gamma \Cone + \eps \kk^{-\half} C_1^\gamma \Ctwo \norm{y''_\xi}_{\ell^\infty(\I)}+ \eps^{\frac{3}{2}} \left \{ \Ctwo\norm{(1-\beta_{\xi-1})y'''_\xi}_{\lte} + \Cthr\norm{(1-\beta_{\xi})(y''_\xi)^2}_{\lte} \right \} \leq \delta_2,
 \end{equation*}
then there exists a locally unique minimizer $y_\gamma$ of $\gtotal$ which satisfies
\begin{align*}
 &\norm{y - y_\gamma}_{\U{1}{2}} \\
 &
 \leq  \frac{4}{\underline{A}} \left [\eps^{\half}\kk^{-\frac{3}{2}} C_2^\gamma \Cone +
 \eps^{\frac{3}{2}} \kk^{-\half} C_1^\gamma \Ctwo \norm{y''_\xi}_{\ell^\infty(\I)}
 + \eps^{2} \left \{ \Ctwo\norm{(1-\beta_{\xi-1})y'''_\xi}_{\lte} + \Cthr\norm{(1-\beta_{\xi})(y''_\xi)^2}_{\lte} \right \}\right].
\end{align*}
\end{corollary}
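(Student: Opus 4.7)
The plan is to derive Corollary~\ref{cor: error estimate for BQCE} as a direct consequence of Theorem~\ref{thm: a priori existence for BQCE} by substituting the interface-size scalings established in Remarks~\ref{rem: ghost force} and~\ref{rem: coupling error} into each of the three norms that appear in the hypotheses~\eqref{eqn: apr thm delta one} and~\eqref{eqn: apr thm delta two}, and in the $\U{1}{2}$ error bound. First I would observe that Theorem~\ref{thm: a priori existence for BQCE} is stated in terms of the quantities $\norm{\Delta^2\alpha_\xi}_{\lte}$, $\norm{\Delta^2\alpha_\xi}_{\ell^\infty}$, $\eps\norm{\Delta\beta_\xi y''_\xi}_{\lte}$, and $\eps\norm{\Delta\beta_\xi y''_\xi}_{\ell^\infty}$, together with the Cauchy--Born remainder terms. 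The Cauchy--Born terms involve $(1-\beta_{\xi-1})y'''_\xi$ and $(1-\beta_\xi)(y''_\xi)^2$ and appear verbatim in both the theorem and the corollary, so no rewriting is needed for those.

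For the ghost force terms, I would invoke the bounds
\begin{equation*}
\norm{\Delta^2\alpha_\xi}_{\lte} \leq C_2^\gamma \eps^{1/2}\kk^{-3/2}\quad\text{and}\quad \norm{\Delta^2\alpha_\xi}_{\ell^\infty} \leq C_2^\gamma \kk^{-2}
\end{equation*}
given in the hypotheses of the corollary (which are just the $p=2$ and $p=\infty$ cases of estimate~\eqref{eqn: ghost force estimate}). For the coupling terms, I would use the factorization
\begin{equation*}
\eps\norm{\Delta\beta_\xi y''_\xi}_{\lte} \leq \eps\norm{\Delta\beta_\xi}_{\lte}\norm{y''_\xi}_{\ell^\infty(\I)}\quad\text{and}\quad \eps\norm{\Delta\beta_\xi y''_\xi}_{\ell^\infty} \leq \eps\norm{\Delta\beta_\xi}_{\ell^\infty}\norm{y''_\xi}_{\ell^\infty(\I)},
\end{equation*}
using that $\Delta\beta_\xi$ is supported in the interface, and then apply the assumed bounds $\eps\norm{\Delta\beta_\xi}_{\lte}\leq C_1^\gamma\eps^{3/2}\kk^{-1/2}$ and $\eps\norm{\Delta\beta_\xi}_{\ell^\infty}\leq C_1^\gamma\eps\kk^{-1}$.

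Substituting these four scalings into condition~\eqref{eqn: apr thm delta one} produces exactly the first displayed inequality in the corollary, substituting into~\eqref{eqn: apr thm delta two} yields the second (with the $\eps^{-1/2}$ prefactor absorbed into the powers of $\eps$ as written), and substituting into the final $\U{1}{2}$ estimate of Theorem~\ref{thm: a priori existence for BQCE} yields the stated error bound. Since Theorem~\ref{thm: a priori existence for BQCE} also gives existence, local uniqueness, and strong stability of $y_\gamma$ as soon as its hypotheses are satisfied, these conclusions transfer automatically.

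The proof is therefore essentially a bookkeeping exercise, and I do not expect any serious obstacle; the only point requiring mild care is ensuring that $\delta_1$ and $\delta_2$ in the corollary can be chosen as the same $\delta_1$ and $\delta_2$ furnished by Theorem~\ref{thm: a priori existence for BQCE}, which follows because the corollary's hypotheses are upper bounds on the quantities appearing in~\eqref{eqn: apr thm delta one}--\eqref{eqn: apr thm delta two}. One should also remark that the $\eps^{-1/2}\Cone\norm{\Delta^2\alpha_\xi}_{\lte}$ term in~\eqref{eqn: apr thm delta two} becomes $\kk^{-3/2} C_2^\gamma \Cone$ after substitution, which is the leading term in the corollary's second hypothesis and accounts for the improvement by the factor $\kk^{-3/2}$ promised in the introduction.
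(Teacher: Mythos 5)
Your proposal is correct and matches the paper's intent exactly: the paper offers no separate proof of Corollary~\ref{cor: error estimate for BQCE}, presenting it as the direct substitution of the scalings from Remarks~\ref{rem: ghost force} and~\ref{rem: coupling error} (via the constants $C_1^\gamma$, $C_2^\gamma$) into the hypotheses and conclusion of Theorem~\ref{thm: a priori existence for BQCE}, which is precisely the bookkeeping you carry out. Your added care about reusing the same $\delta_1,\delta_2$ and about the exponent arithmetic in the $\eps^{-1/2}$ term is sound and consistent with the stated bounds.
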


We give a similar {\it a priori} error estimate for the BQNL method in Theorem~\ref{thm: a priori existence for BQNL}.
Following the notation adopted in Theorem~\ref{thm: a priori existence for BQCE}, let $\btotal$ be the
energy given by $\btotal(y):= \bqnl(y) - <f,y>$. Let $y$ be a minimizer of the energy $\Phi^{total}$
such that $\underline{A} := \min_\xi A_\xi > 0$ for $\fv^2 \Phi(y)$.

\begin{theorem}[A priori error estimate for the BQNL method]\label{thm: a priori existence for BQNL}
Let $\Phi^{total}_\beta$, $y$, and $\underline{A}$ be as above.
Assume that $\underline{A} > 0$, and that $\min_\xi y'_\xi \geq \frac{r^*}{2}>0$. There exist
constants $\delta_1 := \delta_1(\underline{A})$ and $\delta_2
:= \delta_2(\min_\xi y'_\xi, \underline{A}, \Ctwo, \Cthr,
\beta)$ so that if
 \begin{align*}
  \stabbound &\leq \delta_1, \mbox{\quad and}   \\
   \ltwoconsbound{\eps^{\half}}{\eps^\frac{3}{2}} &\leq \delta_2,
 \end{align*}
then there exists a locally unique minimizer $y_\beta$ of $\btotal$ which satisfies
\begin{equation*}
 \norm{y - y_\beta}_{\U{1}{2}} \leq \frac{4}{\underline{A}} \left [ \ltwoconsbound{\eps}{\eps^2} \right ].
\end{equation*}
\end{theorem}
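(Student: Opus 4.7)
The plan is to apply the Inverse Function Theorem (Theorem~\ref{thm: ift}) to the map $\F : \U{1}{2} \to \U{-1}{2}$ defined by $\F(w) := \fv \btotal(y+w)$, with base point $x_0 := 0$. This follows exactly the template of the proof of Theorem~\ref{thm: a priori existence for BQCE}, but with two key simplifications: because BQNL is patch test consistent, the term $\Cone \norm{\Delta^2 \alpha_\xi}_\lte$ that appeared in the BQCE modeling bound drops out entirely, and because the stability estimate for BQNL in Theorem~\ref{thm: aprstab} also has no ghost-force contribution, condition~\eqref{eqn: apr thm delta one} simplifies to just the $\stabbound$ hypothesis.

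First, I would bound the residual $\eta := \norm{\F(0)}_{\U{-1}{2}}$. Since $y$ equilibrates $\Phi^{total}$, we have $\F(0) = \fv \bqnl(y) - \fv \nl(y)$, and the second part of Theorem~\ref{thm: modeling} with $p = 2$ immediately yields
\begin{equation*}
\eta := \ltwoconsbound{\eps}{\eps^2}.
\end{equation*}
Next, I would bound $\sigma := \norm{(\fv \F(0))^{-1}}_{L(\U{-1}{2}, \U{1}{2})}$. Since $\min_\xi y'_\xi \geq \frac{r^*}{2}$, the coefficient $\bar B_\xi$ in the Hessian is nonnegative, so by the second part of Theorem~\ref{thm: aprstab} we have $c_\beta(y) \geq \underline{A} - \minusstabbound$. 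Choosing $\delta_1 := \underline{A}/2$ ensures $c_\beta(y) \geq \underline{A}/2$, and so we take $\sigma := 2/\underline{A}$.

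Then I would establish the Lipschitz bound on $\fv \F$ over $B_{\U{1}{2}}(0, 2\eta\sigma)$. The argument proceeds exactly as in Step~3 of the proof of Theorem~\ref{thm: a priori existence for BQCE}: from the embedding $\norm{w'}_{\ell^\infty} \leq \eps^{-1/2} \norm{w'}_{\lte}$, imposing $\delta_2 \leq \frac{\min_\xi y'_\xi}{2 M'_1}$ with $M'_1 := 2\sigma$ guarantees $y'_\xi + w'_\xi \geq \frac{1}{2} y'_\xi$ throughout the ball, so the second variation of $\bqnl$ is uniformly Lipschitz with constant $L := \eps^{-1/2} L'$ for $L' = L'(C_3(\tfrac{1}{2}\min_\xi y'_\xi))$. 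The fixed point condition $2 L \sigma^2 \eta < 1$ then reduces to $\eps^{-1/2} \eta < \underline{A}^2 / (8 L')$, which is precisely the content of the $\delta_2$ hypothesis (observe $\eps^{-1/2}\eta = \ltwoconsbound{\eps^{1/2}}{\eps^{3/2}}$) once we take $\delta_2 < \underline{A}^2/(8L')$.

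Theorem~\ref{thm: ift} then produces $y_\beta \in \Y$ with $\fv \btotal(y_\beta) = 0$ and $\norm{y - y_\beta}_{\U{1}{2}} \leq 2 \eta \sigma = \frac{4}{\underline{A}} \left[\ltwoconsbound{\eps}{\eps^2}\right]$, which is the claimed error estimate. Local uniqueness and strong stability of $y_\beta$ follow from the same short argument as at the end of the proof of Theorem~\ref{thm: a priori existence for BQCE}: combining the Lipschitz bound with the error estimate gives
\begin{equation*}
c_\beta(y_\beta) \geq c_\beta(y) - |c_\beta(y) - c_\beta(y_\beta)| \geq \tfrac{1}{\sigma} - L(2\eta\sigma) > 0,
\end{equation*}
where the strict inequality uses $2 L \sigma^2 \eta < 1$. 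Since this is a straightforward specialization of the already-written BQCE argument with the ghost-force term excised, I expect no substantive obstacle; the only care required is ensuring that the constants $\delta_1, \delta_2$ are chosen to depend on exactly the quantities listed in the statement and that the BQNL-specific simplifications are tracked consistently through Steps 1--4.
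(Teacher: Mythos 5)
Your proposal is correct and follows exactly the route the paper intends: the paper's own proof of this theorem is a one-line deferral to the proof of Theorem~\ref{thm: a priori existence for BQCE}, substituting the BQNL modeling and stability estimates, which is precisely what you have written out (including the correct identification of $\eps^{-1/2}\eta$ with the $\delta_2$ hypothesis and the disappearance of the ghost-force term). No substantive differences.
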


\begin{proof}
The proof of Theorem~\ref{thm: a priori existence for BQNL} is similar to the the proof of Theorem~\ref{thm: a priori existence for BQCE};
we simply use the modeling and stability estimates for BQNL in place of those for BQCE.
\end{proof}

\section{Conclusion}
We have proposed a smoothly blended version of the
quasicontinuum energy (QCE) which we call the blended
quasicontinuum energy (BQCE). BQCE blends the atomistic and
corresponding Cauchy-Born continuum energies used in the QCE
method over an interfacial region whose thickness is a small
number $k$ of blended atoms. We analyze the accuracy of BQCE
applied to the problem of a one-dimensional chain with
next-nearest neighbor interactions. For this test problem, we
show how to choose the optimal blending function for weighting
the atomistic and Cauchy-Born continuum energies. If BQCE is
implemented using the optimal blending function, the critical
strain error of BQCE can be reduced by a factor of $k^2$. Thus,
we believe that BQCE could be used to accurately compute the
deformation of crystalline solids up to lattice instabilities.

We are continuing the development of the BQCE energy by
modifying the code developed to study the accuracy
 of quasicontinuum methods for two benchmark problems
 --- the stability of a Lomer dislocation pair under shear and
 the stability of a lattice to plastic slip under tensile
 loading~\cite{luskin.durham}.  We note that the potential
to significantly improve the accuracy of existing
quasicontinuum codes by easily implemented modifications is a
very desirable feature of the BQCE approach.  We think our
theoretical analysis of the accuracy
 near instabilities for one-dimensional model problems can
 successfully explain most of the computational results for
 these multi-dimensional benchmark problems.

We expect that some form of the stability and error estimates
in this paper can be generalized to atomistic models with
finite range interactions by extending the techniques given in
\cite{LuskinXingjie}. We are investigating the extension
of our one-dimensional analysis to the multi-dimensional
setting, but we expect that any multi-dimensional analysis
would likely be restricted to perturbations from the
ground state which are far from lattice instability.
We will thus need to rely on our one-dimensional analysis
to attempt to understand the computational results from
our multi-dimensional benchmark studies~\cite{luskin.durham}.

\section{Acknowledgments}
We acknowledge the useful suggestions of Xingjie Li and Dr. Christoph Ortner.


\begin{thebibliography}{10}

\bibitem{arndtluskin07b}
M.~Arndt and M.~Luskin.
\newblock Error estimation and atomistic-continuum adaptivity for the
  quasicontinuum approximation of a frenkel-kontorova model.
\newblock {\em SIAM J. Multiscale Modeling \& Simulation}, 7:147--170, 2008.

\bibitem{arndtluskin07c}
M.~Arndt and M.~Luskin.
\newblock Goal-oriented adaptive mesh refinement for the quasicontinuum
  approximation of a {Frenkel-Kontorova} model.
\newblock {\em Computer Methods in Applied Mechanics and Engineering},
  197:4298--4306, 2008.

\bibitem{badia:forcebasedAtCcoupling}
S.~Badia, P.~Bochev, R.~Lehoucq, M.~L. Parks, J.~Fish, M.~Nuggehally, and
  M.~Gunzburger.
\newblock A force-based blending model for atomistic-to-continuum coupling.
\newblock {\em International Journal for Multiscale Computational Engineering},
  5:387--406, 2007.

\bibitem{badia:onAtCcouplingbyblending}
S.~Badia, M.~Parks, P.~Bochev, M.~Gunzburger, and R.~Lehoucq.
\newblock On atomistic-to-continuum coupling by blending.
\newblock {\em Multiscale Model. Simul.}, 7(1):381--406, 2008.

\bibitem{bauman:applicationofArlequin}
P.~T. Bauman, H.~B. Dhia, N.~Elkhodja, J.~T. Oden, and S.~Prudhomme.
\newblock On the application of the {A}rlequin method to the coupling of
  particle and continuum models.
\newblock {\em Comput. Mech.}, 42(4):511--530, 2008.

\bibitem{xiao:bridgingdomain}
T.~Belytschko and S.~P. Xiao.
\newblock Coupling methods for continuum model with molecular model.
\newblock {\em International Journal for Multiscale Computational Engineering},
  1:115--126, 2003.

\bibitem{Legoll:2005}
X.~Blanc, C.~Le~Bris, and F.~Legoll.
\newblock Analysis of a prototypical multiscale method coupling atomistic and
  continuum mechanics.
\newblock {\em M2AN Math. Model. Numer. Anal.}, 39(4):797--826, 2005.

\bibitem{lions}
X.~Blanc, C.~Le~Bris, and P.-L. Lions.
\newblock Atomistic to continuum limits for computational materials science.
\newblock {\em M2AN Math. Model. Numer. Anal.}, 41(2):391--426, 2007.

\bibitem{curtin_miller_coupling}
W.~Curtin and R.~Miller.
\newblock Atomistic/continuum coupling in computational materials science.
\newblock {\em Modell. Simul. Mater. Sci. Eng.}, 11(3):R33--R68, 2003.

\bibitem{Dobson:2008a}
M.~Dobson and M.~Luskin.
\newblock Analysis of a force-based quasicontinuum approximation.
\newblock {\em M2AN Math. Model. Numer. Anal.}, 42(1):113--139, 2008.

\bibitem{dobsonluskin07}
M.~Dobson and M.~Luskin.
\newblock Analysis of a force-based quasicontinuum approximation.
\newblock {\em Mathematical Modelling and Numerical Analysis}, pages 113--139,
  2008.

\bibitem{Dobson:2008c}
M.~Dobson and M.~Luskin.
\newblock {An analysis of the effect of ghost force oscillation on the
  quasicontinuum error}.
\newblock {\em Mathematical Modelling and Numerical Analysis}, 43:591--604,
  2009.

\bibitem{Dobson:2008b}
M.~Dobson and M.~Luskin.
\newblock {An optimal order error analysis of the one-dimensional
  quasicontinuum approximation}.
\newblock {\em SIAM. J. Numer. Anal.}, 47:2455--2475, 2009.

\bibitem{doblusort:qcf.stab}
M.~Dobson, M.~Luskin, and C.~Ortner.
\newblock Sharp stability estimates for the force-based quasicontinuum
  approximation of homogeneous tensile deformation.
\newblock {\em SIAM J. Multiscale Modeling \& Simulation}, 8:782--802, 2010.
\newblock arXiv:0907.3861.

\bibitem{dobs-qcf2}
M.~Dobson, M.~Luskin, and C.~Ortner.
\newblock Stability, instability, and error of the force-based quasicontinuum
  approximation.
\newblock {\em Archive for Rational Mechanics and Analysis}, 197:179--202,
  2010.
\newblock arXiv:0903.0610.

\bibitem{doblusort:qce.stab}
M.~Dobson, M.~Luskin, and C.~Ortner.
\newblock Accuracy of quasicontinuum approximations near instabilities.
\newblock {\em Journal of the Mechanics and Physics of Solids}, to appear.
\newblock arXiv:0905.2914v2.

\bibitem{qcf.iterative}
M.~Dobson, M.~Luskin, and C.~Ortner.
\newblock Iterative methods for the force-based quasicontinuum approximation.
\newblock {\em Computer Methods in Applied Mechanics and Engineering}, to
  appear.
\newblock arXiv:0910.2013v3.

\bibitem{PhysRevLett.95.060202}
L.~M. Dupuy, E.~B. Tadmor, R.~E. Miller, and R.~Phillips.
\newblock Finite-temperature quasicontinuum: Molecular dynamics without all the
  atoms.
\newblock {\em Phys. Rev. Lett.}, 95(6):060202, Aug 2005.

\bibitem{E:2006}
W.~E, J.~Lu, and J.~Yang.
\newblock {Uniform accuracy of the quasicontinuum method}.
\newblock {\em {Phys. Rev. B}}, 74(21):214115, 2006.

\bibitem{gaviniorbital}
V.~Gavini, K.~Bhattacharya, and M.~Ortiz.
\newblock Quasi-continuum orbital-free density-functional theory: A route to
  multi-million atom non-periodic {DFT} calculation.
\newblock {\em J. Mech. Phys. Solids}, 55:697--718, 2007.

\bibitem{Gunzburger:2008a}
M.~Gunzburger and Y.~Zhang.
\newblock A quadrature-rule type approximation for the quasicontinuum method.
\newblock {\em Multiscale Modeling and Simulation}, 8:571--590, 2010.

\bibitem{LuskinXingjie}
X.~H. Li and M.~Luskin.
\newblock A generalized quasi-nonlocal atomistic-to-continuum coupling method
  with finite range interaction.
\newblock {\em IMA Journal of Numerical Analysis}, to appear.
\newblock arXiv:1007.2336v2.

\bibitem{LinP:2003a}
P.~Lin.
\newblock Theoretical and numerical analysis for the quasi-continuum
  approximation of a material particle model.
\newblock {\em Math. Comp.}, 72(242):657--675, 2003.

\bibitem{LinP:2006a}
P.~Lin.
\newblock Convergence analysis of a quasi-continuum approximation for a
  two-dimensional material without defects.
\newblock {\em SIAM J. Numer. Anal.}, 45(1):313--332, 2007.

\bibitem{luskin.iter.stat}
M.~Luskin and C.~Ortner.
\newblock Linear stationary iterative methods for the force-based
  quasicontinuum approximation.
\newblock In B.~Engquist, O.~Runborg, and R.~Tsai, editors, {\em Numerical
  Analysis and Multiscale Computations}, volume~82 of {\em Lect. Notes Comput.
  Sci. Eng.} Springer Verlag, to appear.
\newblock arXiv:1104.1774.

\bibitem{makridakis10}
C.~Makridakis, C.~Ortner, and E.~S{\"u}li.
\newblock Stress-based atomistic/continuum coupling: A new variant of the
  quasicontinuum approximation.
\newblock preprint, 2010.

\bibitem{qcmethod}
R.~Miller and E.~Tadmor.
\newblock The {QC} code.
\newblock http://www.qcmethod.com/.

\bibitem{Miller:2008}
R.~Miller and E.~Tadmor.
\newblock Benchmarking multiscale methods.
\newblock {\em Modelling and Simulation in Materials Science and Engineering},
  17:053001 (51pp), 2009.

\bibitem{mingyang}
P.~Ming and J.~Z. Yang.
\newblock Analysis of a one-dimensional nonlocal quasi-continuum method.
\newblock {\em Multiscale Model. Simul.}, 7(4):1838--1875, 2009.

\bibitem{Ortiz:1995a}
M.~Ortiz, R.~Phillips, and E.~B. Tadmor.
\newblock {Quasicontinuum analysis of defects in solids}.
\newblock {\em {Philosophical Magazine A}}, 73(6):1529--1563, 1996.

\bibitem{ortner:qnl1d}
C.~Ortner.
\newblock A priori and a posteriori analysis of the quasi-nonlocal
  quasicontinuum method in {1D}, 2009.
\newblock arXiv.org:0911.0671.

\bibitem{Ortner:2008a}
C.~Ortner and E.~S{\"u}li.
\newblock Analysis of a quasicontinuum method in one dimension.
\newblock {\em M2AN Math. Model. Numer. Anal.}, 42(1):57--91, 2008.

\bibitem{ortner.wang}
C.~Ortner and H.~Wang.
\newblock A priori error estimates for energy-based quasicontinuum
  approximations of a periodic chain.
\newblock {\em Mathematical Models and Methods in Applied Sciences}, to appear.

\bibitem{PrudhommeBaumanOden:2006}
S.~Prudhomme, P.~T. Bauman, and J.~T. Oden.
\newblock Error control for molecular statics problems.
\newblock {\em Int. J. Multiscale Comput. Eng.}, 4(5-6):647--662, 2006.

\bibitem{rodney_gf}
D.~Rodney and R.~Phillips.
\newblock Structure and strength of dislocation junctions: An atomic level
  analysis.
\newblock {\em Phys. Rev. Lett.}, 82(8):1704--1707, Feb 1999.

\bibitem{seleson:bridgingmethods}
P.~Seleson and M.~Gunzburger.
\newblock Bridging methods for atomistic-to-continuum coupling and their
  implementation.
\newblock {\em Communications in Computational Physics}, 7:831--876, 2010.

\bibitem{peri}
P.~Seleson, M.~L. Parks, M.~Gunzburger, , and R.~B. Lehoucq.
\newblock Peridynamics as an upscaling of molecular dynamics.
\newblock {\em Multiscale Modeling and Simulation}, 8:204--227, 2009.

\bibitem{shapeev}
A.~V. Shapeev.
\newblock Consistent energy-based atomistic/continuum coupling for two-body
  potential: {1D and 2D} case.
\newblock {\em Multiscale Modeling and Simulation}, to appear.
\newblock arXiv:1010.0512.

\bibitem{Shimokawa:2004}
T.~Shimokawa, J.~Mortensen, J.~Schiotz, and K.~Jacobsen.
\newblock {Matching conditions in the quasicontinuum method: Removal of the
  error introduced at the interface between the coarse-grained and fully
  atomistic region}.
\newblock {\em {Phys. Rev. B}}, 69(21):214104, 2004.

\bibitem{luskin.durham}
B.~Van~Koten, X.~H. Li, M.~Luskin, and C.~Ortner.
\newblock A computational and theoretical investigation of the accuracy of
  quasicontinuum methods.
\newblock In I.~Graham, T.~Hou, O.~Lakkis, and R.~Scheichl, editors, {\em
  Numerical Analysis of Multiscale Problems}. Springer, to appear.
\newblock arXiv:1012.6031.

\bibitem{voter1997a}
A.~F. Voter.
\newblock Hyperdynamics: Accelerated molecular dynamics of infrequent events.
\newblock {\em Phys. Rev. Lett.}, 78:3908--3911, 1997.

\end{thebibliography}
\end{document}